\documentclass{amsart}
\usepackage{mathtools}
\usepackage{amssymb}
\usepackage{etoolbox}
\usepackage[hidelinks]{hyperref}

\newcommand{\pr}[1]{\left(#1\right)}
\newcommand{\br}[1]{\left[#1\right]}
\DeclarePairedDelimiterX{\abs}[1]{\lvert}{\rvert}{\ifblank{#1}{\:\cdot\:}{#1}}
\DeclarePairedDelimiterX{\norm}[1]{\lVert}{\rVert}{\ifblank{#1}{\:\cdot\:}{#1}}
\DeclarePairedDelimiterX{\innerp}[2]{\langle}{\rangle}{{\ifblank{#1}{\:\cdot\:}{#1}}, {\ifblank{#2}{\:\cdot\:}{#2}}}

\providecommand{\st}{}
\newcommand{\SetSymbol}[1][]{%
	\nonscript\:\:#1\vert
	\allowbreak
	\nonscript\:\:
	\mathopen{}}
\DeclarePairedDelimiterX\set[1]\{\}{%
	\renewcommand\st{\SetSymbol[\delimsize]}#1}

\newcommand{\R}{\mathbb{R}}
\newcommand{\C}{\mathbb{C}}
\newcommand{\Z}{\mathbb{Z}}
\newcommand{\Q}{\mathbb{Q}}
\newcommand{\N}{\mathbb{N}}
\newcommand{\cB}{\mathcal{B}}

\newcommand{\ep}{\varepsilon}
\newcommand{\fr}{\frac}
\newcommand{\dd}{\mathop{}\!\mathrm{d}}
\newcommand{\dnu}{\;\mathrm{d}\nu}
\newcommand{\andt}{\text{ and }}
\newcommand{\vchi}{\text{\large{$\chi$}}}
\newcommand{\Diff}{\mathrm{Diff}}
\newcommand{\what}{\widehat}
\newcommand{\fxpair}{\left(\left(f_k\right)_{k\in\N},x\right)}
\newcommand{\m}{\mathfrak{m}}
\newcommand{\sseq}{\subseteq}

\DeclareMathOperator{\supp}{supp}
\DeclareMathOperator{\diam}{diam}

\theoremstyle{plain}
\newtheorem{theorem}{Theorem}
\newtheorem{lemma}[theorem]{Lemma}
\newtheorem{corollary}[theorem]{Corollary}

\title[Exact dimensionality of conformal stationary measures]{Exact Dimensionality of Stationary Measures for Nonuniformly Conformally Contracting Random Diffeomorphisms}
\author{Subhasish Mukherjee}

\address{Department of Mathematics, University of Chicago,
Chicago, IL 60637, USA}

\email{subhasish@uchicago.edu}
\date{Sep 9, 2026}

\begin{document}
	
	\begin{abstract}
		We prove exact dimensionality of ergodic stationary measures for random $C^1$ diffeomorphisms in the single negative Lyapunov scale setting. Let $\nu$ be a Borel probability measure on $\Diff^1(M)$ satisfying a logarithmic $C^1$ moment condition, and let $\mu$ be a $\nu$-stationary ergodic probability measure. If $\lambda_{\mathrm{top}} = \lambda_{\mathrm{bot}} = \lambda<0,$ then $\mu$ is exact dimensional and $ \dim(\mu)={h_\mu^{\mathrm{F}}(\nu)}/{(-\lambda)}.$ No discreteness assumption is imposed on the driving measure.
	\end{abstract}
	
	\maketitle

	\section{Introduction}
	
	\subsection{Setup and statement} 
	Let $M$ be a compact smooth Riemannian manifold with induced metric
	$d:M\times M\to[0,\infty)$ and with $d_M \coloneqq \dim(M) \ge 1$.  For a Borel probability measure $\mu$ on $M$
	and $x\in M$, define respectively  the \emph{lower} and \emph{upper pointwise dimensions of $\mu$ at $x$} as
	\[
	\underline d_\mu(x)
	\coloneqq \liminf_{r\to0}\frac{\log\mu(B(x,r))}{\log r}
	\quad\andt\quad
	\overline d_\mu(x)
	\coloneqq \limsup_{r\to0}\frac{\log\mu(B(x,r))}{\log r}.
	\]
	In general, we may have \smash[b]{$\underline{d}_\mu(x) \ne \overline d_\mu(x)$}, and both quantities may vary as $x$ varies. We say $\mu$ is \emph{exact dimensional} if there is $\delta\in[0,\infty]$
	with \smash{$\underline d_\mu(x)=\overline d_\mu(x)= \delta$ }for $\mu$-almost
	every $x$, and in this case we set $\dim(\mu)\coloneqq\delta$. When $\mu$ is exact dimensional, $\dim(\mu)$ is also equal to the Hausdorff, packing, and several other notions of dimensions of $\mu$ (see e.g. \cite[Section 4]{Young1982}, \cite[Chapter~10]{FalconerTechniques}).
	
	Dynamically defined measures are often exact dimensional because the masses
	and radii of dynamical neighborhoods are controlled by
	additive/subadditive quantities along orbits, so their asymptotic rates are
	governed by ergodic theorems. We study exact dimensionality of stationary measures on $M$ under random dynamics.
	
	To be more precise, let $\nu$ be a Borel probability measure on
	$G\coloneqq\Diff^1(M)$, equipped with its $C^1$ topology. We consider a random composition of i.i.d.\ diffeomorphisms drawn according to $\nu$ and call $\nu$ the \emph{driving measure} of the random dynamics.  We say a Borel probability measure $\mu$ on $M$ is
	\emph{$\nu$-stationary} if
	\[
	\nu*\mu\coloneqq\int_G f_*\mu\,\dnu(f)=\mu.
	\] Thus $\nu$-stationarity means that the distribution of a $\mu$-distributed
	point is preserved under one $\nu$-random diffeomorphism.
	A $\nu$-stationary measure $\mu$ is \emph{ergodic} if it is an extreme point of the convex set of all
	$\nu$-stationary measures.
	
	Let $\mu$ be a Borel probability measure on $M$. We pass to the product space recording both the driving sequence and the point in $M$. Set
	$\what X\coloneqq G^\N\times M$ with measure
	$\what\mu_\nu\coloneqq\nu^{\otimes\N}\times\mu$, and define the skew product map
	$\what F:\what X\to\what X$ by
	\[
	\what F\pr{\pr{f_k}_{k\in\N},x}
	=
	\pr{\pr{f_{k+1}}_{k\in\N},f_0(x)}.
	\]
	Then $\mu$ is $\nu$-stationary if and only if $\what\mu_\nu$ is $\what F$-invariant,
	and, assuming stationarity, $\mu$ is ergodic if and only if $\what\mu_\nu$ is
	$\what F$-ergodic (see e.g. \cite[Exercise 5.10]{Viana} ). 
	
	We also record the exponential contraction rates of the random derivative cocycle. Let $\norm{A}$ and $\m(A)$ respectively denote the largest and smallest
	singular values of an invertible linear map $A$, so
	$\m(A)=\norm{A^{-1}}^{-1}$.  For $g\in G$, set
	$
	[g]_{C^1}\coloneqq\sup_{x\in M}\norm{D_xg},
	$ and suppose that 
    $
    f\mapsto \log[f]_{C^1}+\log[f^{-1}]_{C^1}$
	belongs to $L^1(\nu)$.  When $\mu$ is $\nu$-stationary ergodic, Kingman's subadditive ergodic theorem \cite{Kingman} shows there are
	$\lambda_{\mathrm{top}}, \lambda_{\mathrm{bot}} \in \R$ called respectively
	the \emph{top} and \emph{bottom Lyapunov exponents} of the derivative cocycle,
	such that for $\what\mu_\nu$-almost every
	\smash{$\pr{\pr{f_k}_{k\in\N},x}\in\what X$},
	\begin{align*}
		\lim_{n\to\infty}\frac1n
		\log\norm*{D_x(f_{n-1}\circ\cdots\circ f_0)}
		&=\lambda_{\mathrm{top}},\\
		\lim_{n\to\infty}\frac1n
		\log\m\pr{D_x(f_{n-1}\circ\cdots\circ f_0)}
		&=\lambda_{\mathrm{bot}}.
	\end{align*}
	$\lambda_{\mathrm{top}}$ and $\lambda_{\mathrm{bot}}$ respectively dictate the fastest and slowest exponential rates of infinitesimal expansion.

	For Borel probability measures $\gamma_0,\gamma_1$ on $M$ with $\gamma_0\ll\gamma_1$, set
	\[
	D(\gamma_0\|\gamma_1)
	\coloneqq
	\int_M\log\frac{\dd\gamma_0}{\dd\gamma_1}\,\dd\gamma_0 = \int_M\frac{\dd\gamma_0}{\dd\gamma_1} 
	\log {\frac{\dd\gamma_0}{\dd\gamma_1}} \,\dd\gamma_1 \ge 0,
	\]
	where the last inequality follows since $t\log t\geq t-1$ for all $t > 0$.
	When $\gamma_0\not\ll\gamma_1$ we set $D(\gamma_0\|\gamma_1)=\infty$.
	We define the \emph{Furstenberg entropy} of $\mu$ with respect to $\nu$ as
	\[
	h_\mu^{\mathrm{F}}(\nu)
	\coloneqq
	\int_G D(f_*\mu\|\mu)\,\dnu(f) \in [0, \infty].
	\]
	For $\nu$-distributed $f$ and $\mu$-distributed $x$, $h_\mu^{\mathrm{F}}(\nu)$ measures how much information $f(x)$ reveals about $f$ itself.

	When the asymptotic dynamics are contracting at a single Lyapunov scale, our main theorem shows any ergodic stationary measure is exact dimensional, with dimension determined by the Furstenberg entropy and the Lyapunov exponent.

	\begin{theorem}\label{thm:main}
		Let $\nu$ be a Borel probability measure on $\Diff^1(M)$ such that $
		f\mapsto \log[f]_{C^1}+\log[f^{-1}]_{C^1}
		$
		belongs to $L^1(\nu)$. Suppose $\mu$ is a $\nu$-stationary ergodic Borel probability measure on $M$ and there is $\lambda<0$ such that
		$
		\lambda_{\mathrm{top}}
		=
		\lambda_{\mathrm{bot}}
		=
		\lambda.
		$
		Then $\mu$ is exact dimensional and
		\[
		\dim(\mu)=\frac{h_\mu^{\mathrm{F}}(\nu)}{-\lambda}.
		\]
	\end{theorem}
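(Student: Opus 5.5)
I would obtain both the upper and lower bounds on the local dimension by comparing $\mu(B(x,r))$ with the mass of pulled-back balls along random backward orbits. The Furstenberg entropy enters through a Shannon--McMillan--Breiman type argument for Radon--Nikodym derivatives.

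\textbf{Step 1: Conformality.} Since $\lambda_{\mathrm{top}}=\lambda_{\mathrm{bot}}=\lambda<0$, for $\what\mu_\nu$-a.e.\ $\pr{\pr{f_k},x}$ the composition $f^n_x\coloneqq f_{n-1}\circ\cdots\circ f_0$ is asymptotically conformal at $x$: both $\norm{D_x f^n}$ and $\m(D_x f^n)$ equal $e^{n(\lambda+o(1))}$. Only $C^1$ regularity is available, so there is no distortion control, and I cannot directly transfer this to balls of fixed size. I would therefore build Pesin-type tempered neighbourhoods. Using the $L^1$ moment condition and a standard tempering lemma for $\log[f_k]_{C^1}$, I would show that for every $\ep>0$ there are a tempered radius $\rho(\omega,x)$ and a measurable $C(\omega,x)$ such that
\[
B\pr{f^n x,\,C^{-1}e^{n(\lambda-\ep)}r}\sseq f^n\pr{B(x,r)}\sseq B\pr{f^n x,\,Ce^{n(\lambda+\ep)}r}
\]
for all $r\le\rho e^{-n\ep}$. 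This requires uniform continuity of $Df_k$ on the compact $M$ together with a Lyapunov-chart argument; the $\ep$-slack absorbs the lack of distortion bounds.

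\textbf{Step 2: Entropy identity via the inverse limit.} Write the stationary measure through the backward (past) process. By the martingale convergence theorem, $\mu_\omega\coloneqq\lim_n (f_0\circ\cdots\circ f_{n-1})_*\mu$ exists for $\nu^{\N}$-a.e.\ $\omega$. Because of the contraction, each $\mu_\omega$ is a Dirac mass $\delta_{z(\omega)}$, and $\mu$ is the law of $z(\omega)$. Set
\[
\phi_n(x)=\log\frac{\dd (f_{n-1}\circ\cdots\circ f_0)^{-1}_*\ \cdots}{\dd \mu}
\]
along the orbit, or more concretely consider the cocycle $\log\frac{\dd (f_0)_*\mu}{\dd\mu}(f_0 x)$. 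Its Birkhoff sums under $\what F$ have mean $h^{\mathrm F}_\mu(\nu)$, by the identity $\int\log\frac{\dd f_*\mu}{\dd\mu}\dd f_*\mu=D(f_*\mu\|\mu)$. If $h^{\mathrm F}=\infty$, I would treat it separately by truncation. The key relation to establish is
\[
\mu(B(x,r))\approx \mu\pr{f^n B(x,r)}\cdot \exp\pr{S_n\,\text{(log-density cocycle)}},
\]
where stationarity gives $\mu(A)=\int \mu(f^{-1}A)\dd\nu(f)$. Iterating this with $n\approx \log r/(\lambda)$ makes $f^n B(x,r)$ of size about $1$ by Step 1. Its mass is then bounded below on a positive-measure set by Poincaré recurrence and trivially bounded above by $1$. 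The ergodic theorem then gives $\log\mu(B(x,r))\approx -n h^{\mathrm F}$, hence $\frac{\log\mu(B(x,r))}{\log r}\to h^{\mathrm F}/(-\lambda)$.

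\textbf{Main obstacle.} The hard step is replacing the density $\frac{\dd f_*\mu}{\dd\mu}$ evaluated at a point by the ratio of ball masses $\frac{\mu(f^{-1}B)}{\mu(B)}$, uniformly enough along orbits. This is a non-discrete analogue of the Ledrappier--Young conditional-measure argument. I would first try Lebesgue differentiation on the product $\what X$ (with the Besicovitch covering lemma on $M$), combined with a maximal inequality of Maker/Chung type: the function $\sup_r \abs{\log\frac{\mu(f^{-1}B(y,r))}{\mu(B(y,r))}}$ should be integrable. This gives an $L^1$-dominated version of the pointwise density convergence, and Maker's ergodic theorem then lets the varying scales $r e^{n\lambda}$ be summed. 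The $\ep$-distortion from Step 1 only changes radii by factors $e^{\pm n\ep}$, which Step 1 handles and which cost only $O(\ep)$ in the exponent. Letting $\ep\to0$ then gives exact dimensionality with $\dim(\mu)=h^{\mathrm F}_\mu(\nu)/(-\lambda)$.
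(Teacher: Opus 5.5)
Although you mention the past process, the proposal has two genuine gaps: Step~1's mechanism fails for $C^1$ maps, and Step~2 runs in the wrong time direction. Your entropy architecture is the paper's: telescoping ball-mass ratios $\mu(f^{-1}B)/\mu(B)$, an integrable maximal function from Besicovitch covering, Maker's theorem for the varying radii, then $\ep\to0$.

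\textbf{Step 1.} The inclusions you state can be proved, but not by a Lyapunov-chart argument for $C^1$ maps. In Lyapunov norms the nonlinear error $\norm{D_yf_k-D_{x_k}f_k}$ must be below $c/C(x_{k+1})$, where $C$ is the norm-comparison constant, which is tempered but in general unbounded along orbits. With only a modulus of continuity for $Df_k$ there is no H\"older constant to temper. For instance, a modulus $1/\log(1/\rho)$ forces $\log(1/\rho_k)\gtrsim C(x_{k+1})$, while your ball at time $k$ has $\log(1/\mathrm{radius})\approx k\abs{\lambda}$. The argument therefore breaks whenever the merely subexponential $C(x_{k+1})$ is superlinear along a subsequence. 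For non-compactly supported $\nu$ the moduli of continuity of the $Df_k$ are also not uniform in $k$, and tempering $\log[f_k]_{C^1}$ says nothing about them. Without Lyapunov norms you are left with $\prod_k\norm{D_{x_k}f_k}$, whose exponential rate $\int\log\norm{D_xf}\,\dd\nu(f)\dd\mu(x)$ can strictly exceed $\lambda$.

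The missing idea is the paper's change of driving measure. By Kingman, $\frac1n\int\log\norm{D_xf}\,\dd\nu^{*n}(f)\dd\mu(x)$ and its conorm analogue tend to $\lambda$. So for $\eta=\frac12\nu^{*N}+\frac12\nu^{*(N+1)}$ the \emph{one-step} Riemannian norm and conorm already average to within $(N+\frac12)\ep$ of $(N+\frac12)\lambda$. A \emph{constant} relative error $e^{(N+\frac12)\ep/2}$ per step is then affordable. It comes from uniform continuity of $(f,x)\mapsto\log\norm{D_xf}$ on $K\times M$ for a compact $K\subseteq G$ off which $\log[f]_{C^1}+\log[f^{-1}]_{C^1}$ has small $\eta$-integral, with the crude bounds $[f]_{C^1}$ and $[f^{-1}]_{C^1}^{-1}$ used off $K$. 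Mixing $N$ and $N+1$, rather than taking $\nu^{*N}$, preserves ergodicity by strict convexity of $L^2(\mu)$.

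\textbf{Step 2.} Forward compositions contract, so by your own Step~1 the set $f^nB(x,r)$ has radius about $re^{n\lambda}$, not about $1$, when $n\approx\log r/\lambda$. Your forward relation, with cocycle $\log\frac{\dd(f_k)_*\mu}{\dd\mu}(x_{k+1})$ of mean $h_\mu^{\mathrm{F}}(\nu)$, really says $\log\mu(B(x_n,re^{n\lambda}))\approx\log\mu(B(x,r))-nh_\mu^{\mathrm{F}}(\nu)$. Inserting a lower bound $\mu(f^nB(x,r))\ge c$ would give $\log\mu(B(x,r))\ge nh_\mu^{\mathrm{F}}(\nu)+O(1)$, which is impossible when $h_\mu^{\mathrm{F}}(\nu)>0$. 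The forward relation only describes small balls around the moving point $x_n$ and gives no pointwise information at a fixed $x$.

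You must instead fix the terminal point $x_0$ of the two-sided extension and push balls from $x_{-n}$ forward to $x_0$, summing $\psi(F^{-n+j}z)$ with $\psi(z)=\log\frac{\dd(f_{-1})_*\mu}{\dd\mu}(x_0)$. The paper normalizes the radii $r_{n,j}^\pm$ by the running maximum of the Birkhoff sums of $\log a^\pm$, so every intermediate radius stays below $r_0$, where the one-step inclusions are valid. The initial radius at $x_{-n}$ is then only $e^{-o(n)}$, not of order one. Poincar\'e recurrence to a set where $\mu(B(y,\rho))\ge c$ therefore does not by itself give $\frac1n\log\mu(B(x_{-n},r_{n,0}^\pm))\to0$; the paper gets this from a packing bound and Borel--Cantelli (Lemma~\ref{lem:endpoint-mass}).

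Two smaller points: the sample measures $\mu_\omega$ need not be single Dirac masses (though you never use this), and $h_\mu^{\mathrm{F}}(\nu)\le-\lambda\dim M<\infty$ by Crauel's estimate, so no truncation is needed.
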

	
	\subsection{Discussion}

We also note that the proof of the main theorem given here extends  to skew-products over arbitrary base dynamics with a generalized definition of Furstenberg entropy as in \cite[Section 4.1]{BrownRodriguezHertz2026}; in particular the result extends to the setting where the random diffeomorphisms are not necessarily independent (but still identically distributed). The slightly less general result and proof has been given here for the sake of clarity and brevity, but we give some details in the appendix.

    The dimension formula itself has a simple information-theoretic
	interpretation.  If $\mathbf f\sim\nu$ and $X\sim\mu$ are independent, then
	stationarity gives $\mathbf f(X)\sim\mu$ and
	$
	h_\mu^{\mathrm{F}}(\nu)$ is the mutual information $I(\mathbf f;\mathbf f(X)).
	$
	Thus Theorem \ref{thm:main}
	expresses dimension as the information revealed by one random step per
	unit of logarithmic spatial contraction.  
    
	The relation between entropy, Lyapunov exponents, and dimension goes back to
	the deterministic theory.  Young established the dimension formula for
	hyperbolic measures on surfaces \cite{Young1982}, and Ledrappier--Young
	developed the general higher-dimensional dimension-entropy formula
	\cite{LedrappierYoung1985}.  Barreira--Pesin--Schmeling subsequently proved
	exact dimensionality for arbitrary hyperbolic ergodic invariant measures of
	$C^{1+\alpha}$ diffeomorphisms
	\cite{BarreiraPesinSchmeling1999}.  For random diffeomorphisms,
	Ledrappier--Young \cite{LedrappierYoung1988} and Liu--Xie
	\cite{LiuXie2006} obtained analogous results for sample measures., which are the fiberwise conditional measures associated to specific realizations of the random dynamics. In contrast, a stationary measure is the average over all such realizations. Exact dimensionality of the sample measures does not, in general,
imply exact dimensionality of their stationary barycenter.

	There are also direct results for stationary measures of nonlinear random dynamics.  Feng--Hu prove exact dimensionality for finite $C^1$ iterated function systems under a similar conformality assumption, with dimension given by projection entropy divided by the common Lyapunov exponent \cite{FengHu2009}.  Their projection entropy agrees with the generalized Furstenberg entropy for the associated skew product, and in the Bernoulli case, this reduces to the usual Furstenberg entropy. Their maps are diffeomorphisms onto their images and the IFS is finitely generated, whereas we consider global diffeomorphisms and allow a non-discrete driving law.
    
    He--Jiao--Xu prove that, for a finitely supported $C^2$ random
	walk by orientation-preserving circle diffeomorphisms whose support preserves no common probability measure, every ergodic stationary measure is exact dimensional and satisfies the corresponding Furstenberg entropy--exponent formula \cite{HeJiaoXu2023}. Their argument extends to compactly supported driving measures, but is specific to the circle.   
    
    More recently, in an unpublished preprint, Brown--Rodríguez Hertz prove exact dimensionality for contracting stationary measures with a general negative Lyapunov spectrum and establish a Ledrappier--Young-type formula for Furstenberg entropy \cite{BrownRodriguezHertz2026}.  Their result assumes
	$C^{1+\alpha}$ regularity, a discrete driving measure of finite entropy, and corresponding logarithmic moment hypotheses.
	
	Theorem~\ref{thm:main} treats a complementary regime.  The collapse to a single
	Lyapunov scale allows lower regularity and a more
	general driving law: the maps are only $C^1$, and $\nu$ may be an arbitrary
	Borel probability measure on $\Diff^1(M)$ satisfying the logarithmic $C^1$ moment assumption.  This also allows
	non-discrete laws of the sort arising, for example, from time-one maps in
	stochastic-flow models; see \cite{Baxendale1989} for related entropy and
	Lyapunov exponent theory for stochastic flows.

    Related results are known for stationary measures of random matrix products.
Hochman--Solomyak prove exact dimensionality on the projective line under
unboundedness and total irreducibility, with dimension given by Furstenberg
entropy over the Lyapunov gap
\cite{HochmanSolomyak2017}. Rapaport proves a Ledrappier--Young formula for
finitely supported strongly irreducible and proximal random walks
\cite{Rapaport2021}, while Lessa and Ledrappier--Lessa obtain analogous
entropy--dimension results on flag spaces under discrete finite-moment
hypotheses on the driving measure \cite{Lessa,LedrappierLessa2023,LL}. The projective and flag
	structures in their works also provide geometric filtrations adapted to
	distinct Lyapunov exponents. It is therefore natural to ask whether the
	$C^1$, arbitrary driving measure result proved here can be extended to
	several distinct negative Lyapunov exponents. 

    \subsection{Acknowledgments} Many of the ideas in the proof are inspired by similar arguments given by Ledrappier, Lessa, and Ledrappier--Lessa \cite{Ledrappier, Lessa,LedrappierLessa2023,LL}. The author also benefited greatly from discussions with Amie Wilkinson, James Marshall Reber, Pablo Lessa, and Aaron Brown. 

    \subsection{LLM Use} The author used an LLM to help with literature search, and in catching mathematical and prosaic errors. The LLM also helped simplify the proof of Lemma \ref{lem:maker-average}. The main mathematical arguments and writing are the author's own.

	\section{Proof}
	
	\subsection{Proof Outline} The proof given here is largely self-contained, starting with Crauel's estimate \cite{Crauel93} which implies $f_\ast \mu \ll \mu$ for almost every $f$, and then using some basic facts from ergodic theory and geometric measure theory. Throughout the proof, we assume the hypotheses in the statement of Theorem \ref{thm:main} and fix the corresponding $\nu$, $\mu$, and $\lambda$.	We compare the exponential decay of ball masses along backward
	random trajectories with the exponential contraction of their radii.
	
	Fixing $\ep>0$, we first replace $\nu$ by
	\[
	\eta=\frac12\nu^{*N}+\frac12\nu^{*(N+1)}
	\]
	for $N$ sufficiently large depending on $\ep$.  This gives
	$h_\mu^{\mathrm{F}}(\eta)=(N+\frac12)h_\mu^{\mathrm{F}}(\nu)$, preserves ergodicity of $\mu$, and makes the expected upper and
	lower logarithmic derivative scales arbitrarily close to
	$(N+\frac12)\lambda$.
	
	We then pass to the invertible natural extension and consider radii
	$r_{n,j}^\pm$ that follow the contraction of balls along backward
	orbits as $j$ ranges from $0$ to $n$.  The resulting ball inclusions give telescoping inequalities
	expressing the change in logarithmic ball mass as a sum of local
	Radon--Nikodym log-quotients.  The terminal radii satisfy
	\[
	\frac1n\log r_{n,n}^\pm\longrightarrow (N+\tfrac12)\pr{\lambda+O(\ep)},
	\]
	whereas the initial radii have subexponential scale and contribute
	no asymptotic ball mass.
	A Maker's ergodic theorem argument identifies the averages of local log-quotients with the Furstenberg entropy
	$h_\mu^{\mathrm{F}}(\eta)$.  Putting it together, we have
	\[
	\frac{h_\mu^{\mathrm{F}}(\nu)}{-\lambda+2\ep}
	\le \underline d_\mu(x)
	\le \overline d_\mu(x)
	\le \frac{h_\mu^{\mathrm{F}}(\nu)}{-\lambda-2\ep}
	\]
	for $\mu$-almost every $x$, and letting $\ep\to0$ proves the theorem.

	\subsection{Change of driving measure}We now proceed to the details of the proof. In Lemmas \ref{furstconv} and \ref{ergchar}, we assume that $\kappa$ is a Borel probability measure on $G$ such that  $\mu$ is $\kappa$-stationary and $f \mapsto \log[f]_{C^1} + \log[f^{-1}]_{C^1}$ is in $L^1(\kappa)$. Recall that the convolution $\kappa^{\ast n}$ is defined as the pushforward of $\kappa^{\otimes n}$ under $(f_{0}, f_1, \dots, f_{n - 1}) \mapsto f_{n - 1} \circ f_{n - 2} \circ \cdots \circ f_0$. Note that convolution is associative, and so $\kappa^{\ast n} \ast \mu  = \mu$. 
    We first examine how Furstenberg entropy behaves under convolution.
	\begin{lemma}\label{furstconv}
		We then have $f_\ast \mu \ll \mu$ for $\kappa$-almost every $f \in G$, and $h_\mu^{\mathrm{F}}(\kappa^{\ast n}) = n h_\mu^{\mathrm{F}}(\kappa)$.
	\end{lemma}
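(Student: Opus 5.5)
The plan has two parts: first absolute continuity via Crauel's estimate, then additivity of Furstenberg entropy under convolution via a chain rule for Radon--Nikodym derivatives.

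\emph{Absolute continuity.} The outline says the proof starts from Crauel's estimate \cite{Crauel93}. I would use it in the following form: for a stationary measure with the log-moment condition, $\int_G D(f_*\mu\|\mu)\,\dd\kappa(f)$ is finite. It is bounded by the integrated log-Jacobian term, namely at most the $\kappa$-expectation of $d_M\log[f^{-1}]_{C^1}$, up to sign conventions. Hence $D(f_*\mu\|\mu)<\infty$, and so $f_*\mu\ll\mu$, for $\kappa$-a.e.\ $f$. If Crauel's estimate is only available qualitatively, giving $f_*\mu\ll\mu$ a.e., I would first prove additivity in $[0,\infty]$. The finiteness then follows from the $n=1$ bound.

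\emph{Additivity.} I would prove $h_\mu^{\mathrm F}(\kappa^{*(n+1)})=h_\mu^{\mathrm F}(\kappa^{*n})+h_\mu^{\mathrm F}(\kappa)$ and induct on $n$. Write $g=f_{n}\circ\cdots\circ f_0=f_n\circ g'$. For a.e.\ tuple, $g'_*\mu\ll\mu$ and $f_n{}_*\mu\ll\mu$. Since $f_n$ is a diffeomorphism, pushforward preserves absolute continuity, so $g_*\mu=(f_n)_*g'_*\mu\ll (f_n)_*\mu\ll\mu$. The chain rule gives, $g_*\mu$-a.e.,
\[
\frac{\dd g_*\mu}{\dd\mu}(y)=\frac{\dd g'_*\mu}{\dd\mu}\bigl(f_n^{-1}y\bigr)\cdot\frac{\dd (f_n)_*\mu}{\dd\mu}(y).
\]
Taking $\log$ and integrating against $g_*\mu$ splits the integrand into two terms. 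The first term becomes $\int \log\frac{\dd g'_*\mu}{\dd\mu}\,\dd g'_*\mu=D(g'_*\mu\|\mu)$ after the change of variables $y=f_n(z)$. For the second term, integrate over $f_0,\dots,f_{n-1}$ first with $f_n$ fixed. Stationarity $\int g'_*\mu\,\dd\kappa^{\otimes n}=\kappa^{*n}*\mu=\mu$ turns the measure $g_*\mu$ into $(f_n)_*\mu$, and the term becomes $D((f_n)_*\mu\|\mu)$. Integrating over the remaining variables yields the claimed recursion.

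\emph{Main obstacle.} Splitting the integral of the log of a product into a sum needs integrability. Both logs can take either sign, so $\int(a+b)=\int a+\int b$ is not automatic even when everything is nonnegative on average. I would handle this in two steps. First, the negative parts are controlled: $\int(\log\phi)^-\,\phi\,\dd\mu\le e^{-1}$ for any density $\phi$, and the same bound holds uniformly for each factor in the appropriate measure. Second, I would use Tonelli on positive and negative parts separately, which justifies exchanging the integrals over $G^{n+1}$ and $M$ even when the entropies are infinite. The identity then holds in $[0,\infty]$. With Crauel's bound, all quantities are finite and the additivity holds as real numbers.
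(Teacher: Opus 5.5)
Your proposal is correct and follows essentially the same route as the paper. Crauel's bound gives finiteness and hence $f_*\mu\ll\mu$. Then induction via the Radon--Nikodym chain rule for $f_n\circ g'$, Fubini, and $\kappa^{*n}*\mu=\mu$ splits $h_\mu^{\mathrm{F}}(\kappa^{*(n+1)})$ into $h_\mu^{\mathrm{F}}(\kappa^{*n})+h_\mu^{\mathrm{F}}(\kappa)$. The one addition is your bound of $e^{-1}$ on each negative part, which spells out the integrability the paper asserts directly from finiteness of the two entropies.
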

	\begin{proof}
		By Crauel
		\cite[Theorem~5.1]{Crauel93}, we have that
		$h_\mu^{\mathrm{F}}(\kappa)\le-\lambda_{\text{bot}}\dim M<\infty$. Thus
		$D(f_*\mu\|\mu)<\infty$ for $\kappa$-almost every $f\in G$, and so 
		$f_*\mu\ll\mu$ and $\log\frac{\dd f_*\mu}{\dd\mu}\in L^1(f_*\mu)$ for all such $f$. Furthermore, $\mu$ is $\kappa^{\ast n}$ stationary as discussed.
        
        We show inductively that $h_\mu^{\mathrm{F}}(\kappa^{*n})=nh_\mu^{\mathrm{F}}(\kappa)$. This is clear when $n = 1$, so suppose  \smash{$h_\mu^{\mathrm{F}}(\kappa^{*n})=nh_\mu^{\mathrm{F}}(\kappa)$
		for some $n\ge1$. Then $h_\mu^{\mathrm{F}}(\kappa^{*n})<\infty$, so
		$f_*\mu\ll\mu$} for $\kappa^{*n}$-almost every $f$. Thus for
		$(\kappa^{*n}\otimes\kappa)$-almost every $(f,g)$, we have
		$
		g_*(f_*\mu)\ll g_*\mu\ll\mu,
		$ and so
		the Radon-Nikodym chain rule gives
		\[
		\frac{\dd(g\circ f)_*\mu}{\dd\mu}(g(f(x)))
		=
		\frac{\dd f_*\mu}{\dd\mu}(f(x))
		\frac{\dd g_*\mu}{\dd\mu}(g(f(x)))
		\]
		for $\mu$-almost every $x$. 
		All terms are integrable by finiteness of
		$h_\mu^{\mathrm{F}}(\kappa^{*n})$ and $h_\mu^{\mathrm{F}}(\kappa)$, so by Fubini we have as desired
		\begin{align*}
			h_\mu^{\mathrm{F}}\left(\kappa^{*(n+1)}\right)
			&=
			\int
			\!\log\frac{\dd f_*\mu}{\dd\mu}(f(x))\,
			\dd\kappa^{*n}(f)\dd\kappa(g)\dd\mu(x)\\
			&\quad+
			\int
			\!\log\frac{\dd g_*\mu}{\dd\mu}(g(f(x)))\,
			\dd\kappa^{*n}(f)\dd\kappa(g)\dd\mu(x)\\
			&=h_\mu^{\mathrm{F}}(\kappa^{*n})
			+\int
			\log\frac{\dd g_*\mu}{\dd\mu}(g(y))\,
			\dd(\kappa^{*n}*\mu)(y)\dd\kappa(g)\\
			&=h_\mu^{\mathrm{F}}(\kappa^{*n})+h_\mu^{\mathrm{F}}(\kappa) =(n+1)h_\mu^{\mathrm{F}}(\kappa). \qedhere
		\end{align*}
	\end{proof}

	We recall that the \emph{Markov operator} $P_\kappa: L^2(\mu) \to L^2(\mu)$ is defined by $P_\kappa u(x):=\int_Gu(f(x))\,\dd\kappa(f)$. For $u \in L^2(\mu)$, Jensen's inequality gives
	\[
	\norm{P_\kappa u}_{L^2(\mu)}^2
	\le
	\int_{G\times M}|u(f(x))|^2\,\dd\kappa(f)\dd\mu(x)
	=
	\norm{u}_{L^2(\mu)}^2,
	\]
	where the last equality follows from $\kappa$-stationarity.
	In other words, $P_\kappa$ is a contraction on $L^2$. The next lemma records a useful characterization of stationary ergodic measures using the Markov operator. For a proof, see \cite[Proposition 5.13]{Viana}.
	\begin{lemma}\label{ergchar}
		$\mu$ is $\kappa$-ergodic if and only if the only $P_\kappa$ invariant functions are constant $\mu$-almost everywhere.
	\end{lemma}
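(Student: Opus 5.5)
The two directions of Lemma~\ref{ergchar} are handled separately. In both, ergodicity is meant in the sense fixed in the introduction: $\mu$ is an extreme point of the convex set of $\kappa$-stationary measures. We also use the equivalence quoted there between ergodicity of $\mu$ and $\what F$-ergodicity of $\kappa^{\otimes\N}\times\mu$.

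\emph{Nonconstant invariant functions give non-ergodicity.} Suppose $u\in L^2(\mu)$ satisfies $P_\kappa u=u$ and is not $\mu$-a.e.\ constant. The first step is to show $u(f(x))=u(x)$ for $\kappa\otimes\mu$-almost every $(f,x)$. Expanding, and using stationarity together with invariance,
\[
\int_{G\times M}\abs{u(f(x))-u(x)}^2\,\dd\kappa(f)\dd\mu(x)
=\norm{u}^2-2\innerp{P_\kappa u}{u}+\norm{u}^2=0,
\]
where the cross term equals $\norm{u}^2$ because $\innerp{P_\kappa u}{u}=\innerp{u}{u}$. This is the equality case of the Jensen computation above. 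Next, replacing $u$ by its real or imaginary part and then by $\min(\max(u,a),b)$ (each still satisfies $u\circ f=u$ almost surely), we may assume $u$ is bounded real. Choose $c$ so that $A=\set{u>c}$ has $0<\mu(A)<1$. Then $\vchi_A\circ f=\vchi_A$ a.e., so $\mu(f^{-1}A\,\triangle\,A)=0$ for $\kappa$-a.e.\ $f$. It follows that $\mu_A=\mu(\cdot\cap A)/\mu(A)$ and $\mu_{A^c}$ are $\kappa$-stationary: for Borel $B$,
\[
\int f_*\mu_A(B)\,\dd\kappa(f)=\frac{1}{\mu(A)}\int\mu(f^{-1}B\cap f^{-1}A)\,\dd\kappa=\frac{(\kappa*\mu)(B\cap A)}{\mu(A)}=\mu_A(B).
\]
Hence $\mu=\mu(A)\mu_A+\mu(A^c)\mu_{A^c}$ writes $\mu$ as a nontrivial convex combination of distinct stationary measures, so $\mu$ is not extreme.

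\emph{Non-ergodicity gives a nonconstant invariant function.} Suppose $\mu=t\mu_1+(1-t)\mu_2$ with $\mu_1\neq\mu_2$ stationary and $0<t<1$. Then $\mu_1\le t^{-1}\mu$, so the density $u=\dd\mu_1/\dd\mu$ lies in $L^\infty\subset L^2(\mu)$ and is not a.e.\ constant, since otherwise $\mu_1=\mu$. The main obstacle is showing this density is invariant. Here $u$ is naturally invariant under the adjoint $P_\kappa^*$, not under $P_\kappa$, because stationarity of $\mu_1$ reads $\int u\,P_\kappa g\,\dd\mu=\int g\,u\,\dd\mu$ for all $g$, that is, $P_\kappa^*u=u$. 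The adjoint is also a contraction, and for a contraction $T$ on a Hilbert space, $Tu=u$ iff $T^*u=u$: if $Tu=u$, then
\[
\norm{T^*u-u}^2=\norm{T^*u}^2-2\,\mathrm{Re}\innerp{u}{Tu}+\norm{u}^2\le 0 .
\]
Thus $P_\kappa u=u$ with $u$ nonconstant, which finishes this direction.

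If this fixed-point argument caused trouble, the fallback would be the route of \cite[Proposition~5.13]{Viana}: convert $P_\kappa$-invariant functions into $\what F$-invariant functions on $G^\N\times M$ (and conversely, via conditional expectation onto the $M$-coordinate using the martingale convergence theorem), then appeal to the quoted equivalence between ergodicity of $\mu$ and of $\kappa^{\otimes\N}\times\mu$.
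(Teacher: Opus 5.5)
Your proof is correct, but it follows a different route from the paper. The paper gives no argument for Lemma~\ref{ergchar}; it simply defers to \cite[Proposition~5.13]{Viana}.

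You instead prove both directions directly from the definition of ergodicity the paper adopts, namely extremality in the convex set of $\kappa$-stationary measures. You use only two Hilbert-space facts:
\begin{itemize}
\item The equality case $\int\abs{u(f(x))-u(x)}^2\,\dd\kappa(f)\,\dd\mu(x)=0$. This upgrades $P_\kappa u=u$ to genuine almost-everywhere invariance $u\circ f=u$. A level set $\{u>c\}$ with $0<\mu(\{u>c\})<1$ then splits $\mu$ into two distinct stationary pieces.
\item A contraction and its adjoint have the same fixed vectors. This converts the natural $P_\kappa^*$-invariance of the density $\dd\mu_1/\dd\mu$ into $P_\kappa$-invariance.
\end{itemize}

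What your route buys is a short, self-contained proof of both directions, which is exactly what Lemma~\ref{etalem} uses. It needs no passage to the skew product $\what F$ and no martingale convergence. It also avoids reconciling the paper's extreme-point definition with whatever notion of ergodicity the cited source takes as primary. The citation buys only brevity.

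Two minor remarks. First, the equivalence with $\what F$-ergodicity that you announce at the outset is never used in your main argument, only in the fallback, so it can be dropped. Second, the truncation step is unnecessary. All you need is a level set $\{u>c\}$ of positive measure less than $1$, and that exists for any real-valued $u$ that is not $\mu$-a.e.\ constant.
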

	
	We now use the previous two lemmas to replace $\nu$ with a different driving measure better suited to our estimates. The idea is similar to that in \cite[Section 8.1]{LL} while using extra averaging to recover ergodicity.
	
	\begin{lemma}\label{etalem}
		Let $\ep\in(0,-\lambda/4)$. Then there is $N\ge1$ such that, setting
		\[
		\eta:=\frac12\nu^{*N}+\frac12\nu^{*(N+1)},
		\]
		$\mu$ is ergodic $\eta$-stationary, $f_\ast \mu \ll \mu$ for $\eta$-a.e. $f$,
		$
		h_\mu^{\mathrm{F}}(\eta)=\pr{N+\fr12}h_\mu^{\mathrm{F}}(\nu),
		$
		and
		\begin{align*}
			\pr{N+\fr12}(\lambda-\ep)
			&<\int_{G\times M}\log\m(D_xf)\,\dd\eta(f)\dd\mu(x)\\
			&\le\int_{G\times M}\log\norm{D_xf}\,\dd\eta(f)\dd\mu(x)
			<\pr{N+\fr12}(\lambda+\ep).
		\end{align*}
	\end{lemma}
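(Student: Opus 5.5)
The plan is to check each claimed property of $\eta$ separately, taking $N$ large enough for the exponent estimates.

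\emph{Stationarity and entropy.} Since $\mu$ is $\nu^{*N}$- and $\nu^{*(N+1)}$-stationary and convolution is linear in the driving measure, $\eta*\mu=\mu$. Also $\eta$ inherits the logarithmic moment condition, because $[g\circ f]_{C^1}\le[g]_{C^1}[f]_{C^1}$, so the moments are subadditive under composition. Furstenberg entropy is linear in the driving measure: $h_\mu^{\mathrm F}(\kappa)=\int D(f_*\mu\|\mu)\,\dd\kappa(f)$. Lemma \ref{furstconv} then gives
\[
h_\mu^{\mathrm F}(\eta)=\tfrac12 N h_\mu^{\mathrm F}(\nu)+\tfrac12(N+1)h_\mu^{\mathrm F}(\nu)=\pr{N+\tfrac12}h_\mu^{\mathrm F}(\nu).
\]
Absolute continuity $f_*\mu\ll\mu$ for $\eta$-a.e.\ $f$ follows from Lemma \ref{furstconv} applied to $\kappa=\nu^{*N}$ and $\kappa=\nu^{*(N+1)}$.

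\emph{Ergodicity.} This is the main point, and it explains the averaging: $\mu$ need not be $\nu^{*N}$-ergodic. I will use Lemma \ref{ergchar}. Suppose $u\in L^2(\mu)$ satisfies $P_\eta u=u$, and write $P=P_\nu$, so that $P_\eta=\tfrac12P^N+\tfrac12P^{N+1}$. Since $P^N$ and $P^{N+1}$ are $L^2$ contractions, $u$ is the midpoint of $P^Nu$ and $P^{N+1}u$, both of norm at most $\norm{u}$. Strict convexity of the Hilbert norm then forces $P^Nu=P^{N+1}u=u$. Hence $Pu=P(P^Nu)=P^{N+1}u=u$. By $\nu$-ergodicity and Lemma \ref{ergchar}, $u$ is constant a.e., so $\mu$ is $\eta$-ergodic.

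\emph{Exponent estimates.} Set $a_n=\int\log\norm{D_x(f_{n-1}\circ\cdots\circ f_0)}\,\dd\nu^{\otimes n}\dd\mu$. This sequence is subadditive by stationarity. Kingman's theorem, with $L^1$ convergence under the moment condition, gives $a_n/n\to\lambda_{\mathrm{top}}=\lambda$. Similarly, $b_n=\int\log\m(\cdots)$ is superadditive and $b_n/n\to\lambda_{\mathrm{bot}}=\lambda$. So for large $N$,
\[
\abs{a_n-n\lambda}<n\ep \quad\text{and}\quad \abs{b_n-n\lambda}<n\ep \qquad\text{for } n\in\{N,N+1\}.
\]
The integral against $\eta$ equals $\tfrac12 a_N+\tfrac12 a_{N+1}$, which is less than $(N+\tfrac12)(\lambda+\ep)$. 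Likewise the $\m$-integral is greater than $(N+\tfrac12)(\lambda-\ep)$. The middle inequality holds because $\m\le\norm{\cdot}$ pointwise.

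I expect the only slightly delicate step to be justifying the $L^1$ convergence of the averages. Here I will use the integrated form of Kingman's theorem ($a_n/n\to\inf_n a_n/n$) together with the identification of that limit with $\lambda_{\mathrm{top}}$, which uses the ergodicity of $\what\mu_\nu$.
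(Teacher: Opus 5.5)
Your proposal is correct and follows the paper's proof essentially step for step. It uses the same Kingman argument for the integrated subadditive sequences $\log\norm{D_x(\cdot)}$ and $\log\norm{(D_x(\cdot))^{-1}}=-\log\m(D_x(\cdot))$ at $n=N,N+1$ with averaging, the same appeal to linearity of $h_\mu^{\mathrm{F}}$ plus Lemma~\ref{furstconv}, and the same strict-convexity argument for $P_\eta=\tfrac12(P_\nu^N+P_\nu^{N+1})$ followed by Lemma~\ref{ergchar}; your explicit check that $\nu^{*n}$ and $\eta$ inherit the logarithmic $C^1$ moment, which those lemmas require, is a detail the paper leaves implicit.
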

	
	\begin{proof}
		For $\fxpair \in \what X$ and $n \ge 1$, set
		\[
		c_n^\pm\fxpair \coloneqq \log\left\|\pr{D_x(f_{n-1}\circ\cdots\circ f_0)}^{\pm 1}\right\|.
		\]
		Note then that $(c_n^\pm)_{n \in \N}$ are both subadditive sequences, and 
		\[
		-\sum_{j = 0}^{n - 1} \log \br{f_j^{\mp 1}}_{C^1} \le c_n^\pm\fxpair \le \sum_{j = 0}^{n - 1} \log \br{f_j^{\pm 1}}_{C^1},
		\]
		so $c_n^\pm \in L^1(\what \mu_\nu)$ for all $n \in \N$.
		Kingman's subadditive ergodic theorem then implies
		$
		\frac1n\int_{\what X}c_n^\pm \;\dd\what\mu_\nu\to\pm\lambda.
		$
		In particular, we have by the definition of $\nu^{\ast n}$ that
		\begin{align*}
			&\fr1n \int_{G\times M}\log\norm*{D_xf}\;\dd\nu^{\ast n}(f)\;\dd\mu(x)\\
			=\;  &\fr1n \begin{multlined}[t]
				\int_{G^n\times M}\log\norm*{D_x\pr{f_{n - 1} \circ f_{n - 2} \circ \cdots \circ f_0}}
				\dd\nu^{\otimes n}(f_0, \dots, f_{n - 1})\;\dd\mu(x)
			\end{multlined}\\
			=\;  &\frac1n\int_{\what X}c_n^+ \;\dd\what\mu_\nu\longrightarrow \lambda.
		\end{align*}
		Similarly, 
		\[
		\fr 1n \int_{G\times M} \log\mathfrak{m}\pr{ D_x f }\;\dd\nu^{\ast n}(f)\;\dd\mu(x) \longrightarrow \lambda.
		\]
		Choose $N$ such that both normalized integrals differ from $\lambda$ by less than $\ep$ for $n=N,N+1$. Averaging the bounds then gives the desired inequalities. We also note that $h_\mu^{\mathrm{F}}$ is linear in $\nu$ and that $h_\mu^{\mathrm{F}}(\nu^{\ast n}) = nh_\mu^{\mathrm{F}}(\nu)$ by Lemma \ref{furstconv}, so we get $h_{\mu}^{F}(\eta)= (N + \fr12) h_{\mu}^F(\nu)$.
		
		It remains to prove ergodicity. Since $P_\eta=\tfrac12(P_\nu^N+P_\nu^{N+1})$, every $u\in L^2(\mu)$ satisfying $P_\eta u=u$ also satisfies
		$
		\norm{u}_2 = \norm{P_\eta u}_2\le\tfrac12\pr{\norm{P_\nu^Nu}_2+\norm{P_\nu^{N+1}u}_2}\le\norm{u}_2
		$ since $P_\nu$ is a contraction. Since we have equality, strict convexity of $L^2(\mu)$ gives $P_\nu^Nu=P_\nu^{N+1}u=u$, so $P_\nu u=u$. Lemma~\ref{ergchar} implies that $u$ is constant, and another application of Lemma~\ref{ergchar} shows that $\mu$ is $\eta$-ergodic.
	\end{proof}

	The next lemma estimates radii of images of balls under one step of the $\eta$-walk.
	
	\begin{lemma}\label{lem:ball-scales}
		Let $\ep$ and $\eta$ be as in Lemma~\ref{etalem}. There is $r_0>0$ and there are Borel functions $a^\pm:G\times M\to(0,\infty)$ such that, for every $f\in G$, $x\in M$, and $r\in(0,r_0]$,
		\[
		B(f(x),a^-(f,x)r)\subset f[B(x,r)]\subset B(f(x),a^+(f,x)r).
		\]
		Moreover, $\log a^\pm\in L^1(\eta\otimes\mu)$, and, setting
		$
		A^\pm:=\int_{G\times M}\log a^\pm(f,x)\,\dd\eta(f)\dd\mu(x),
		$
		we have
		\[
		\pr{N+\fr12}(\lambda-2\ep)<A^-\le A^+<\pr{N+\fr12}(\lambda+2\ep).
		\]
	\end{lemma}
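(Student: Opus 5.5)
We will build $a^\pm$ from the derivative $D_xf$ together with a uniform-continuity modulus for $Df$. The ball inclusions come from a local comparison of $f$ with its linearization in exponential charts. The integral bounds then follow from Lemma~\ref{etalem} by splitting off a small error.

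\emph{Choice of $r_0$.} Let $r_0>0$ be small, say below a fixed fraction of the injectivity radius of $M$, so that on balls of radius $\le r_0$ the exponential map distorts distances by a factor in $[e^{-\delta},e^{\delta}]$ for a small $\delta>0$ to be fixed later. The difficulty is that $f$ ranges over all of $\Diff^1(M)$, with no uniform modulus of continuity for $Df$. So we cannot use one $r_0$ with a fixed multiplicative error for every $f$. We therefore put the $f$-dependence into $a^\pm$ through global quantities. For $r\le r_0$, the mean value inequality along geodesics gives
\[
f[B(x,r)]\subset B\bigl(f(x),[f]_{C^1}r\bigr).
\]
Applying the same inequality to $f^{-1}$ gives
\[
B\bigl(f(x),[f^{-1}]_{C^1}^{-1}r\bigr)\subset f[B(x,r)],
\]
provided $[f^{-1}]_{C^1}^{-1}r\le r_0$. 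If that fails, we replace the factor by $\min\{[f^{-1}]_{C^1}^{-1},1\}$ and use that $f[B(x,r)]\supset B(f(x),\rho)$ whenever $f^{-1}B(f(x),\rho)\subset B(x,r)$.

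\emph{Trading global for pointwise constants.} The global constants alone give $A^\pm$ comparable only to $\int\log[f]_{C^1}$, which need not be close to $(N+\frac12)\lambda$. So we must trade them for pointwise quantities on a large set. By uniform continuity of $Df$ for each fixed $f$, define
\[
\rho(f,\delta)=\sup\set{\rho\le r_0 \st \norm{D_yf\circ P-P'\circ D_xf}\le \delta\, \m(D_xf)\ \text{whenever } d(x,y)\le\rho},
\]
where $P,P'$ denote parallel transports. This is positive and Borel in $f$. Then set
\[
a^+(f,x)=e^{2\delta}\norm{D_xf},\qquad a^-(f,x)=e^{-2\delta}\m(D_xf)
\]
on the set $E=\set{(f,x)\st \rho(f,\delta)\ge r_0'}$, and $a^+=[f]_{C^1}$, $a^-=[f^{-1}]_{C^1}^{-1}$ (suitably capped) off $E$. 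The trouble is that the inclusions must hold for all $r\in(0,r_0]$ with a single $r_0$. We handle this by choosing $r_0$ after $\delta$, using tightness: since $\eta$ is a Borel probability measure on the Polish space $\Diff^1(M)$, there is a compact set $K$ of $\eta$-measure close to $1$. On $K$, $Df$ is equicontinuous and $\m(D_xf)$ is bounded below, so a single $r_0$ works for all $f\in K$ with error $\delta$. Off $K$ we use the global constants. On $K$ the inclusions follow from the usual inverse-function estimate: in charts, $f$ is within $\delta\,\m(D_xf)r$ of its linearization on $B(x,r)$, so the image ball radii are $\m(D_xf)(1\mp O(\delta))r$ and $\norm{D_xf}(1+O(\delta))r$.

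\emph{Integral bounds.} The moment hypothesis for $\nu$ gives $\log[f]_{C^1}+\log[f^{-1}]_{C^1}\in L^1(\nu^{*n})$ by submultiplicativity, hence also in $L^1(\eta)$. It follows that $\log a^\pm\in L^1(\eta\otimes\mu)$, since $\abs{\log a^\pm}$ is bounded by this function plus a constant. We then write
\[
A^+\le \int\log\norm{D_xf}\,\dd\eta\,\dd\mu+2\delta+\int_{K^c}\bigl(\log[f]_{C^1}+\log[f^{-1}]_{C^1}\bigr)\,\dd\eta.
\]
Here we used $\log[f]_{C^1}-\log\norm{D_xf}\le\log[f]_{C^1}+\log[f^{-1}]_{C^1}$. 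By absolute continuity of the integral, choosing $\delta$ small and $K$ large makes the error terms smaller than the slack $(N+\frac12)\ep$ left strict in Lemma~\ref{etalem}. The bound for $A^-$ is symmetric, and $A^-\le A^+$ is clear since $a^-\le a^+$ pointwise.

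The main obstacle is the uniformity in $r$ across a non-compact family of diffeomorphisms. The tightness/compact-set step, which uses equicontinuity of $Df$ over a $C^1$-compact set $K$, is the key point to verify carefully.
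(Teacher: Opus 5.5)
Your proposal is correct and follows essentially the paper's route. You choose a compact $K\subseteq G$ by tightness so that $\int_{K^c}\bigl(\log[f]_{C^1}+\log[f^{-1}]_{C^1}\bigr)\,\dd\eta$ is small, use the radii $e^{-c}\m(D_xf)$ and $e^{c}\norm{D_xf}$ for $f\in K$ and the global constants $[f^{-1}]_{C^1}^{-1}$ and $[f]_{C^1}$ off $K$, and then integrate against Lemma~\ref{etalem}. The one difference is that the paper proves the inner inclusion more cheaply: it applies the mean-value estimate, which comes from uniform continuity of $\log\norm{D_yf}$ on $K\times M$, to $f^{-1}$ over the compact set $K^{-1}$, and uses $\norm{D_{f(x)}f^{-1}}=\m(D_xf)^{-1}$. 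That makes your auxiliary $\rho(f,\delta)$, the parallel transports and the inverse-function step in charts unnecessary.
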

	
	\begin{proof}
		Note that
		$
		\int_G\pr{\log[f]_{C^1}+\log[f^{-1}]_{C^1}}\,\dd\eta(f)<\infty$,
		so there is a compact $K\subseteq G$ such that
		\[
		\int_{K^c}\pr{\log[f]_{C^1}+\log[f^{-1}]_{C^1}}\,\dd\eta(f)
		<\pr{N+\fr12}\frac\ep2.
		\]
		Since $(f,x)\mapsto\log\norm{D_xf}$ is uniformly continuous on $K\times M$, there is $r_1>0$ such that for every $r < r_1$, $f\in K$, and $x,y\in M$ with $d(x,y)<r$, we have
		$
		\norm{D_yf}\le e^{\pr{N+\fr12}\ep/2}\norm{D_xf}.
		$
		The mean value theorem then implies
		\[
		d(f(x),f(y))\le e^{\pr{N+\fr12}\ep/2}\norm{D_xf}\,d(x,y)
		< e^{\pr{N+\fr12}\ep/2}\norm{D_xf} r
		\]
		for all such $r$, $f$, $x$, and $y$.
		
		Note $K^{-1}:=\{f^{-1}:f\in K\}$ is also compact, so similarly there is $r_2>0$ such that for all $r < r_2$, $f\in K$, and $x,y\in M$ with $d(x,y)<r$, we have
		$
		d(f^{-1}(x),f^{-1}(y))\le e^{\pr{N+\fr12}\ep/2}\norm{D_xf^{-1}}\,d(x,y).
		$ In particular, if
		$
		r\le r_2/\sup_{f\in K}[f]_{C^1}
		$
		and
		\[
		d(f(x),y)<e^{-\pr{N+\fr12}\ep/2}\m(D_xf)r,
		\]
		then $d(f(x),y)<r_2$, hence 
		\[
		d(x,f^{-1}(y))
		\le e^{\pr{N+\fr12}\ep/2}
		\norm{D_{f(x)}f^{-1}}d(f(x),y)
		<r.
		\]
		With this in mind, we define $r_0:=\min\{r_1,r_2/\sup_{f\in K}[f]_{C^1}\}$. Then for all $f\in K$, $x\in M$, and $r\in(0,r_0]$, we have
		\[
		B\pr{f(x),e^{-\pr{N+\fr12}\ep/2}\m(D_xf)r}
		\subset f[B(x,r)]
		\subset B\pr{f(x),e^{\pr{N+\fr12}\ep/2}\norm{D_xf}r}.
		\]
		For general $f\in G$, $x\in M$, and $r>0$, we also have the elementary containments
		\[
		B\pr{f(x),\br{f^{-1}}_{C^1}^{-1}r}
		\subset f[B(x,r)]
		\subset B\pr{f(x),[f]_{C^1}r}.
		\]
		Thus define
		\begin{align*}
			a^-(f,x)&:=
			\begin{cases}
				e^{-\pr{N+\fr12}\ep/2}\m(D_xf),&f\in K,\\
				[f^{-1}]_{C^1}^{-1},&f\notin K,
			\end{cases}\quad \andt\\
			a^+(f,x)&:=
			\begin{cases}
				e^{\pr{N+\fr12}\ep/2}\norm{D_xf},&f\in K,\\
				[f]_{C^1},&f\notin K.
			\end{cases}
		\end{align*}
		Note $a^\pm$ are Borel, the required inclusions hold, and $a^-\le a^+$. Moreover, our initial analysis implies both differences
		\[
		\log\m(D_xf)-\log a^-(f,x) \quad \andt \quad  \log a^+(f,x)-\log\norm{D_xf}
		\]
		are nonnegative and bounded above by
		\[
		\pr{N+\fr12}\frac\ep2\vchi_K(f)
		+\pr{\log[f]_{C^1}+\log[f^{-1}]_{C^1}}\vchi_{K^c}(f).
		\]
		
		Thus $\log a^\pm\in L^1(\eta\otimes\mu)$ and we have
		\begin{align*}
			0&\le\int_{G\times M}\log\m(D_xf)\,\dd\eta(f)\dd\mu(x)-A^-<\pr{N+\fr12}\ep, \andt\\
			0&\le A^+-\int_{G\times M}\log\norm{D_xf}\,\dd\eta(f)\dd\mu(x)<\pr{N+\fr12}\ep.
		\end{align*}
		Combining the above bounds with Lemma~\ref{etalem} gives the desired result.
	\end{proof}

	\subsection{Two-sided extension and backwards telescope} \label{twoside}
	
	To compare the masses of balls and apply ergodic arguments along backward iterates, we now pass to the two-sided natural extension. Fix $\ep$ and the corresponding $N,\eta,r_0,a^\pm$, and $A^\pm$ as in Lemmas \ref{etalem}  and \ref{lem:ball-scales}.
	Let $X:=G^\Z\times M$ and define $F: X \to X$ by
	\[
	F\pr{(f_k)_{k\in\Z},x}:=\pr{(f_{k+1})_{k\in\Z},f_0(x)}.
	\]
	Let $\pi_M$, $\pi_{G^\Z}$, and $\pi_{G^\N\times M}$ be the corresponding projection maps from $X$. Define $\widehat\mu_\eta:=\eta^{\otimes\N}\otimes\mu$. We let $\mu_\eta$ be the natural extension measure of $\what \mu_\eta$, i.e. $\mu_\eta$ is the unique $F$-invariant measure on $X$ such that
	\[
	\pr{\pi_{G^\Z}}_\ast \mu_\eta = \eta^{\otimes\Z}
	\quad\text{and}\quad
	\pr{\pi_{G^\N\times M}}_\ast \mu_\eta = \widehat\mu_\eta.
	\]
	Then $(X,\mu_\eta,F)$ is ergodic (see e.g. \cite[Chapter 5.4]{Viana} for more details).

	For $z = \pr{\pr{f_k}_{k\in\Z},x}\in X$, write $x_{n}\coloneqq \pi_M F^n(z)$, so $x_0 = x$ and $x_{n + 1} = f_n(x_n)$. It follow that for all bounded Borel $\varphi:G\times M\to\C$,
	\begin{equation}\label{eq:integral}
		\int_X\varphi(f_{-1},x_0)\;\dd\mu_\eta(z)
		=\int_{G\times M}\varphi(f,f(x))\,\dd\mu(x)\dd\eta(f).
	\end{equation}

	Set $X_0:=\bigcap_{k\in\Z}\{z\in X:x_k\in\supp\mu\}$, and note $X_0$ is Borel, $F$-invariant, and $\mu_\eta(X_0)=1$. 
	For $(z,r)\in X\times(0,r_0]$, define
	\[
	\psi_r(z):=
	\begin{cases}\displaystyle\log\frac{\mu(f_{-1}^{-1}B(x_0,r))}{\mu(B(x_0,r))},&z\in X_0,\\[2mm]
		0,&z\notin X_0.
	\end{cases}
	\]
	Note the map $(z,r)\mapsto\psi_r(z)$ is Borel, since
	$
	(f,x,r)\mapsto\mu(f^{-1}B(x,r))
	$
	is Borel. We then set $\psi(z) \coloneqq \lim_{n \to \infty} \psi_{1/n}(z)$, so $\psi$ is also Borel in $z$.

    By Lemma \ref{furstconv} we have $f_*\mu\ll\mu$ and $D(f_*\mu\|\mu)<\infty$ for
	$\eta$-almost every $f\in G$. For all such $f$ and for $\mu$-almost every $x$, the Besicovitch differentiation theorem implies
	\[
	\lim_{r \to 0} 
	\frac{(f_*\mu)(B(x,r))}{\mu(B(x,r))} = \frac{\dd f_*\mu}{\dd\mu}(x).
	\]
	In particular,
	$
	\psi(z) = \log\frac{\dd(f_{-1})_*\mu}{\dd\mu}(x_0)
	$
	for $\mu_\eta$-almost every $z$.
	By \eqref{eq:integral}, we also see
	\[
	\int \psi(z) \,\dd\mu_\eta(z) = h_\mu^{\mathrm{F}}(\eta).
	\]
	
	With this preliminaries out of the way, we now pick our good sequence of radii along the orbit. For $n\ge1$, $0\le j\le n$, and $z\in X$, define
	\[
	r_{n,j}^\pm(z):=r_0\exp\left(
	\sum_{k=-n}^{-n+j-1}\log a^\pm(f_k,x_k)
	-\max_{0\le i\le n}\sum_{k=-n}^{-n+i-1}\log a^\pm(f_k,x_k)
	\right).
	\]
	It then follows that for $1 \le j \le n$ we have $0<r_{n,j}^\pm\le r_0$ and 
	\begin{equation}\label{eq:radii}
		r_{n,j}^\pm=a^\pm(f_{-n+j-1},x_{-n+j-1})r_{n,j-1}^\pm.
	\end{equation}
	
	The following lemma gives us elementary inequalities that naturally split our estimates into three separate terms we can bound individually; the lemma and the radii above are similar to the constructions in \cite[Theorem~5.1]{Ledrappier} and \cite[Sections~8.3--8.4]{LL}.
	
	\begin{lemma}\label{lem:telescope}
		For $\mu_\eta$-almost every $z\in X$ and every $n\ge1$,
		\begin{align*}
			\log\mu\pr{B(x_0,r_{n,n}^-)}
			&\le\log\mu\left(B(x_{-n},r_{n,0}^-)\right) -\sum_{j=1}^n\psi_{r_{n,j}^-}(F^{-n+j}z), \andt\\
			\log\mu\left(B(x_0,r_{n,n}^+)\right)
			&\ge\log\mu\left(B(x_{-n},r_{n,0}^+)\right) -\sum_{j=1}^n\psi_{r_{n,j}^+}(F^{-n+j}z).
		\end{align*}
	\end{lemma}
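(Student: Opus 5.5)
We prove both inequalities by telescoping one step at a time. Fix $z\in X_0$ outside the null set where the $\eta$-almost-sure property $f_*\mu\ll\mu$ fails for some $f_k$. For $1\le j\le n$ write $w=F^{-n+j}z$. Then $\pi_M(w)=x_{-n+j}$ and the $(-1)$-st map of $w$ is $f_{-n+j-1}$, so
\[
\psi_{r}(w)=\log\mu\pr{f_{-n+j-1}^{-1}B(x_{-n+j},r)}-\log\mu\pr{B(x_{-n+j},r)}.
\]
Since $r_{n,j}^\pm\le r_0$, this holds with $r=r_{n,j}^\pm$. Because $z\in X_0$ and $x_{-n+j}\in\supp\mu$, all ball masses are positive, so every logarithm is finite. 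Hence
\[
-\psi_{r_{n,j}^\pm}(F^{-n+j}z)=\log\mu\pr{B(x_{-n+j},r_{n,j}^\pm)}-\log\mu\pr{f_{-n+j-1}^{-1}B(x_{-n+j},r_{n,j}^\pm)}.
\]
Summing over $j$, the claimed inequalities reduce to the one-step comparisons
\[
\mu\pr{f_{-n+j-1}^{-1}B(x_{-n+j},r_{n,j}^-)}\le\mu\pr{B(x_{-n+j-1},r_{n,j-1}^-)},
\qquad
\mu\pr{f_{-n+j-1}^{-1}B(x_{-n+j},r_{n,j}^+)}\ge\mu\pr{B(x_{-n+j-1},r_{n,j-1}^+)}.
\]
Granting these, the sum telescopes: adding $\log\mu(B(x_{-n+j},r_{n,j}^\pm))-\log\mu(B(x_{-n+j-1},r_{n,j-1}^\pm))\lessgtr-\psi_{r_{n,j}^\pm}(F^{-n+j}z)$ over $j=1,\dots,n$ gives exactly the two stated bounds.

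The one-step comparisons follow from the ball inclusions of Lemma~\ref{lem:ball-scales}. Put $f=f_{-n+j-1}$, $y=x_{-n+j-1}$ and $r=r_{n,j-1}^\pm\in(0,r_0]$, so that $f(y)=x_{-n+j}$. By \eqref{eq:radii}, $r_{n,j}^\pm=a^\pm(f,y)\,r$. Lemma~\ref{lem:ball-scales} gives
\[
B(f(y),a^-(f,y)r)\subset f[B(y,r)]
\quad\andt\quad
f[B(y,r)]\subset B(f(y),a^+(f,y)r).
\]
Applying $f^{-1}$, a bijection, yields
\[
f^{-1}B(x_{-n+j},r_{n,j}^-)\subset B(y,r_{n,j-1}^-)
\quad\andt\quad
B(y,r_{n,j-1}^+)\subset f^{-1}B(x_{-n+j},r_{n,j}^+).
\]
Monotonicity of $\mu$ then gives the two comparisons.

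The argument is elementary, and the only point needing care is bookkeeping. One must check that the index conventions match: the map preceding position $-n+j$ in $F^{-n+j}z$ is $f_{-n+j-1}$. One must also check that every radius used is at most $r_0$, which holds for $r_{n,j}^\pm$ for all $0\le j\le n$ by the normalization by the maximum of the partial sums. Finally, the "almost every $z$" is needed only to ensure $z\in X_0$, so that all logarithms are finite. The inequalities hold for every $n$ simultaneously, with no limiting argument, so the main obstacle is just this careful tracking of indices and the direction of the inclusions.
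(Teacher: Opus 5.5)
Your proof is correct and is essentially the paper's argument: the one-step inclusions $f_{-n+j-1}^{-1}B(x_{-n+j},r_{n,j}^-)\subset B(x_{-n+j-1},r_{n,j-1}^-)$ (and the reverse inclusion for the $+$ radii), obtained from Lemma~\ref{lem:ball-scales} together with \eqref{eq:radii}, are turned into mass comparisons and then telescoped over $1\le j\le n$. The extra exclusion of the set where $f_*\mu\ll\mu$ fails is unnecessary; restricting to $z\in X_0$ is enough, and $\log\mu(f_{-n+j-1}^{-1}B(x_{-n+j},r))$ is finite because this open set contains $x_{-n+j-1}\in\supp\mu$.
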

	
	\begin{proof}
		For $1\le j\le n$, by definition we have
		\[
		f_{-n+j-1}^{-1}B(x_{-n+j},r_{n,j}^-)
		\subset B(x_{-n+j-1},r_{n,j-1}^-),
		\]
		so
		\[
		\exp\pr{\psi_{r_{n,j}^-}(F^{-n+j}z)}\mu\pr{B(x_{-n+j},r_{n,j}^-)}
		\le\mu\pr{B(x_{-n+j-1},r_{n,j-1}^-)}.
		\]
		Multiplying the above inequality over $1\le j\le n$, taking log, and using \smash{\eqref{eq:radii}} gives the first desired inequality. The second desired inequality is analogous.
	\end{proof}
	
	We also have the following estimates on the growth rate of the radii which will be essential in the final proof.

	\begin{lemma}\label{lem:maker}
		For $\mu_\eta$-almost every $z\in X$ and for every $\theta\in(0,1)$, we have as $n \to \infty$
		\[ \frac1n\log r_{n,0}^\pm(z)\to0, \qquad
		\frac1n\log r_{n,n}^\pm(z)\to A^\pm,
		\quad \andt \quad \max_{\lceil\theta n\rceil\le j\le n}r_{n,j}^\pm(z)\to 0.
		\]
	\end{lemma}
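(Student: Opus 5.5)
Define $g^\pm(z):=\log a^\pm(f_{-1},x_{-1})$, a function on $X$. Since $\log a^\pm\in L^1(\eta\otimes\mu)$ and $(f_{-1},x_{-1})$ has law $\eta\otimes\mu$ under $\mu_\eta$, we have $g^\pm\in L^1(\mu_\eta)$ and $\int g^\pm\,\dd\mu_\eta=A^\pm$. Set $S_i(z):=\sum_{k=-n}^{-n+i-1}\log a^\pm(f_k,x_k)$. The summand $\log a^\pm(f_k,x_k)$ equals $g^\pm(F^{k+1}z)$, so $S_i=\sum_{m=-n+1}^{-n+i}g^\pm\circ F^{m}$. This is a Birkhoff sum along the backward orbit. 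Its length is $i$ and it starts at time $-n+1$.

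Write $T_n:=\sum_{m=-n+1}^{0}g^\pm\circ F^m$, the full backward sum. Then $S_i=T_n-T_{n-i}$, and
\[
\log r_{n,j}^\pm=\log r_0+S_j-\max_{0\le i\le n}S_i=\log r_0-T_{n-j}+\min_{0\le i\le n}T_{n-i}=\log r_0-T_{n-j}+\min_{0\le l\le n}T_l .
\]
By Birkhoff's theorem applied to $F^{-1}$, which is ergodic with respect to $\mu_\eta$, we get $T_l/l\to A^\pm<0$ almost surely. Here we use $A^\pm<(N+\frac12)(\lambda+2\ep)<0$, since $\ep<-\lambda/4$ by Lemma~\ref{lem:ball-scales}.

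\textbf{First two limits.} Since $T_l\to-\infty$ linearly, the minimum satisfies $\min_{0\le l\le n}T_l=T_n+o(n)$. Indeed, $m_n:=\min_{l\le n}T_l$ obeys $m_n/n\to A^\pm$: for $l\le \delta n$ we have $T_l\ge -C(z)-|A^\pm|\delta n-\delta n$, and for $l\ge\delta n$ we have $T_l\ge l(A^\pm-\delta')\ge n(A^\pm-\delta')$. Moreover $m_n\le T_n$. Then at $j=0$,
\[
\tfrac1n\log r_{n,0}^\pm=\tfrac1n(\log r_0-T_n+m_n)\to0.
\]
At $j=n$,
\[
\tfrac1n\log r_{n,n}^\pm=\tfrac1n(\log r_0-T_0+m_n)=\tfrac1n(\log r_0+m_n)\to A^\pm .
\]

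\textbf{Third limit.} For $\lceil\theta n\rceil\le j\le n$ we have $l:=n-j\le(1-\theta)n$, so
\[
\log r_{n,j}^\pm\le \log r_0+m_n-\min_{l\le(1-\theta)n}T_l .
\]
By the same argument, $m_n-\min_{l\le(1-\theta)n}T_l=nA^\pm-(1-\theta)nA^\pm+o(n)=\theta nA^\pm+o(n)$. Since $\theta A^\pm<0$, this tends to $-\infty$, so the maximum of $r_{n,j}^\pm$ over these $j$ tends to $0$ uniformly.

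\textbf{Main obstacle.} The main obstacle is the careful $o(n)$ control of running minima of Birkhoff sums, specifically that $\min_{l\le cn}T_l=cnA^\pm+o(n)$. I would handle it by fixing $\delta>0$ and splitting at $l\le\delta n$, where we only use that $T_l$ is bounded below by $\inf_l (T_l-l(A^\pm-\delta))$, a finite almost-sure constant, versus $l>\delta n$, where we use $T_l/l\in(A^\pm-\delta,A^\pm+\delta)$ for large $l$. Finally, the statement ``for every $\theta$'' follows because the exceptional null set comes from Birkhoff's theorem alone, which is independent of $\theta$.
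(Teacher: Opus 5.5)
Your proof is correct and is essentially the paper's argument. Your $T_m$ is exactly the paper's backward Birkhoff sum $S_m^\pm=\sum_{k=-m}^{-1}\log a^\pm(f_k,x_k)$, and your running-minimum identity for $\log r_{n,j}^\pm$ is a rewriting of the paper's decomposition $S_n^\pm-S_{n-j}^\pm$. The paper handles your ``main obstacle'' in one line by noting $\max_{0\le m\le n}|S_m^\pm-mA^\pm|=o(n)$, which gives $\log r_{n,j}^\pm=\log r_0+jA^\pm+o(n)$ uniformly in $j$ and makes the $\delta n$ splitting unnecessary.
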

	
	\begin{proof}
		For $m\ge0$, set
		$
		S_m^\pm(z):=\sum_{k=-m}^{-1}\log a^\pm(f_k,x_k),
		$
		Birkhoff's ergodic theorem gives $S_m^\pm(z)=mA^\pm+o(m)$
		for $\mu_\eta$-almost every $z$. In particular,
		\[
		\max_{0\le m\le n}\left|S_m^\pm-mA^\pm\right|=o(n),
		\] 
		so uniformly in $j$ we have
		$
		\sum_{k=-n}^{-n+j-1}\log a^\pm(f_k,x_k)
		=
		S_n^\pm-S_{n-j}^\pm
		=
		jA^\pm+o(n).
		$
		Since $A^\pm<0$, we get
		\[
		\max_{0\le j\le n}
		\sum_{k=-n}^{-n+j-1}\log a^\pm(f_k,x_k) = \max\set*{0, \max_{1\le j\le n}
			\sum_{k=-n}^{-n+j-1}\log a^\pm(f_k,x_k)}
		=o(n).
		\]
		Thus for all $0 \le j \le n$, we have \[
		\log r_{n,j}^\pm
		=
		\log r_0+jA^\pm+o(n).\]
		Taking $j=n$ and $j=0$ gives respectively
		$
		\frac1n\log r_{n,n}^\pm\to A^\pm$ and $\frac1n\log r_{n,0}^\pm\to0$.
		Also, for $\theta\in(0,1)$ and $\lceil\theta n\rceil\le j\le n$, we have
		$
		\log r_{n,j}^\pm
		\le
		\log r_0+\theta nA^\pm+o(n) \to -\infty
		$
		 since $A^\pm<0$. Therefore, we have
		$
		\max_{\lceil\theta n\rceil\le j\le n}r_{n,j}^\pm \to 0
		$ as desired.
	\end{proof}
	
	\subsection{Asymptotic estimates}

	We will use two elementary estimates for computing dimension using balls of varying radii as in the terms appearing in Lemma \ref{lem:telescope}.
	
	\begin{lemma}\label{lem:endpoint-mass}
		Let $(Y,\gamma)$ be a probability space. For every $n\ge1$, let $g_n:Y\to M$ and $s_n:Y\to(0,r_0]$ be measurable, and suppose $(g_n)_*\gamma=\mu$. If $\frac 1n\log s_n(y)\to0$ for $\gamma$-almost every $y\in Y$, then
		\[
		\frac1n\log\mu(B(g_n(y),s_n(y)))\longrightarrow0
		\]
		for $\gamma$-almost every $y\in Y$.
	\end{lemma}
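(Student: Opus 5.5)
Since $\mu(B(\cdot,\cdot))\le 1$, we have $\frac1n\log\mu(B(g_n(y),s_n(y)))\le 0$. It therefore suffices to prove the lower bound
\[
\liminf_{n\to\infty}\frac1n\log\mu(B(g_n(y),s_n(y)))\ge 0
\]
for $\gamma$-almost every $y$. The plan is a Borel--Cantelli argument, carried out for a fixed $\delta>0$ and then intersected over $\delta=1/k$, $k\in\N$.

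First, by the hypothesis on $s_n$, for $\gamma$-a.e.\ $y$ there is $n_0(y)$ with $s_n(y)\ge e^{-\delta n}$ for $n\ge n_0(y)$. For such $n$ we get $\mu(B(g_n(y),s_n(y)))\ge \mu(B(g_n(y),e^{-\delta n}))$. This reduces the problem to a deterministic radius, with $g_n(y)$ distributed as $\mu$.

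Second, consider the bad sets
\[
E_n:=\set{x\in M \st \mu(B(x,e^{-\delta n}))<e^{-2d_M\delta n - \delta n}}.
\]
I claim $\mu(E_n)\le Ce^{-\delta n}$ for a constant $C$ depending only on $M$. To see this, take a maximal $\rho$-separated set in $E_n$ with $\rho=e^{-\delta n}/2$. Compactness of the Riemannian manifold gives a covering number $\le C\rho^{-d_M}$, and the balls $B(p,2\rho)$ around these points cover $E_n$. Each such ball meeting $E_n$ at some point $x$ satisfies $B(p,2\rho)\subset B(x,4\rho)=B(x,2e^{-\delta n})$. To avoid the radius doubling, I will instead define $E_n$ with radius $e^{-\delta n}$ and cover it by balls of radius $e^{-\delta n}/2$ centered at points of $E_n$. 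Each such ball is contained in $B(x,e^{-\delta n})$ for its center $x\in E_n$, so it has mass $<e^{-(d_M+1)\delta n}$. Summing over at most $C e^{d_M\delta n}2^{d_M}$ balls gives $\mu(E_n)\le C' e^{-\delta n}$. (So the exponent $d_M+1$ rather than $2d_M+1$ suffices.)

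Third, since $(g_n)_*\gamma=\mu$, we have $\gamma(g_n^{-1}E_n)=\mu(E_n)$, which is summable in $n$. By Borel--Cantelli, for $\gamma$-a.e.\ $y$ we have $g_n(y)\notin E_n$ for all large $n$. Hence
\[
\frac1n\log\mu(B(g_n(y),s_n(y)))\ge -(d_M+1)\delta
\]
eventually. Letting $\delta\to0$ along a countable sequence finishes the proof.

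The only step requiring care is the covering bound. This is where compactness of $M$ enters: it gives a uniform bound of order $r^{-d_M}$ on the number of $r$-balls needed to cover $M$, for all small $r$. Measurability of $E_n$ follows because $x\mapsto\mu(B(x,r))$ is lower semicontinuous.
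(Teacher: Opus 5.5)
Your proof is correct and follows essentially the paper's argument: a packing bound $\mu(\{x:\mu(B(x,r))<q\})\le C_0\,q\,r^{-d_M}$ obtained from a maximal $r/2$-separated subset of the bad set, transfer to $Y$ via $(g_n)_*\gamma=\mu$, Borel--Cantelli, and then letting the exponent $\delta$ (the paper's $1/m$) tend to $0$. The only difference is that you use monotonicity of $r\mapsto\mu(B(x,r))$ to reduce to the single radius $e^{-\delta n}$, whereas the paper sums the packing bound over dyadic radii $\rho_{n,k}\in[e^{-n/(2d_M m)},r_0]$; your reduction shows that sum is unnecessary, and your ``radius doubling'' detour can simply be deleted, since the centers of the maximal separated set already lie in $E_n$.
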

	\begin{proof}
		Recall $d_M=\dim M$. By relative volume comparison
		\cite[Lemma~7.1.4]{Petersen} and compactness of $M$, there is $C_0>0$
		such that every $r/2$-separated subset of $M$ has cardinality at most
		$C_0r^{-d_M}$ for $r\in(0,r_0]$. Hence, if $\set{x_i}_{i}$ is a
		maximal $r/2$-separated subset of
		$\set*{x\in M\st\mu(B(x,r))<q}$, then
		\[
		\set*{x\in M\st\mu(B(x,r))<q}\sseq\bigcup_{i}B(x_i,r/2).
		\]
		We thus get the packing inequality
		\begin{equation}\label{eq:packing}
			\mu\pr{\set*{x\in M\st\mu(B(x,r))<q}}
			\le \sum_{i}\mu(B(x_i,r/2))
			\le C_0qr^{-d_M}.
		\end{equation}
		
		Fix $m\ge1$. For each $n\ge1$, set
		\[
		E_n:=
		\set*{
			y\in Y\st
			s_n(y)\ge e^{-n/(2d_Mm)},\
			\mu(B(g_n(y),s_n(y)))<e^{-n/m}
		}.
		\]
		For $n\ge1$ and $k\ge0$, set
		$
		\rho_{n,k}:=2^ke^{-n/(2d_Mm)}.
		$
		For fixed $n$, if $y\in E_n$, there is a unique $k\ge0$ such that
		$\rho_{n,k}\le s_n(y)<\rho_{n,k+1}$. Since $s_n(y)\le r_0$,
		we have $\rho_{n,k}\le r_0$ for this $k$ and
		$
		\mu(B(g_n(y),\rho_{n,k}))<e^{-n/m}.
		$
		On balance, we see
		\[
		E_n\subset
		\bigcup_{\substack{k\ge0\\ \rho_{n,k}\le r_0}}
		g_n^{-1}\pr{
			\set*{x\in M\st
				\mu(B(x,\rho_{n,k}))<e^{-n/m}}}.
		\]

		Since $(g_n)_*\gamma=\mu$, it follows using \eqref{eq:packing} that
		\begin{align*}
			\gamma(E_n)
			\le
			C_0e^{-n/m}
			\sum_{\substack{k\ge0\\ \rho_{n,k}\le r_0}}
			\rho_{n,k}^{-d_M}\le
			C_0e^{-n/m}e^{n/(2m)}
			\sum_{k\ge0}2^{-kd_M}
			\le 2C_0e^{-n/(2m)}.
		\end{align*}
		Therefore $\sum_{n\ge1}\gamma(E_n)<\infty$, and so by
		Borel--Cantelli, $y\notin E_n$ for large enough $n$ and
		 $\gamma$-almost every $y$.
		
		Furthermore, $\log s_n(y)/n\to0$ almost everywhere, so
		$s_n(y)\ge e^{-n/(2d_Mm)}$ for large enough $n$.
		Consequently
		$
		\mu(B(g_n(y),s_n(y)))\ge e^{-n/m}
		$
		for large enough $n$, so
		\[
		\liminf_{n\to\infty}
		\frac1n\log\mu(B(g_n(y),s_n(y)))\ge-\frac1m.
		\]
		Intersecting the corresponding conull sets over $m\in\N$ and
		letting $m\to\infty$ gives the desired lower bound. Since $\log \mu(B(g_n(y),s_n(y)))\le0$, the reverse bound is immediate and we have the desired result.
	\end{proof}

	\begin{lemma}\label{lem:interpolation}
		Let $x\in\supp\mu$ and for $n \in \N$ let $s_n>0$ and $t < 0$ with
		$\log s_n/n\to t$ as $n \to \infty$. Then we have
		\[
		\limsup_{n\to\infty}\frac{\log\mu(B(x,s_n))}{n}
		= t{\underline d_\mu(x)}, \quad \andt
		\quad
		\liminf_{n\to\infty}\frac{\log\mu(B(x,s_n))}{n}
		=
		t{\overline d_\mu(x)}.
		\]
	\end{lemma}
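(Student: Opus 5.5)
The plan is to reduce the statement to a comparison between the continuous limit $r\to0$ and the discrete sequence of radii $s_n$, using only monotonicity of $r\mapsto\mu(B(x,r))$ and the fact that $\log s_n=nt+o(n)$. Since $x\in\supp\mu$, every ball $B(x,s_n)$ has positive mass, so all logarithms are finite. Since $t<0$, we have $s_n\to0$. We use the convention $t\cdot\infty=-\infty$ for the case of infinite dimension.

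\emph{Easy inequalities.} For large $n$ we have $\log s_n<0$, and we may write
\[
\frac{\log\mu(B(x,s_n))}{n}=\frac{\log\mu(B(x,s_n))}{\log s_n}\cdot\frac{\log s_n}{n}.
\]
The second factor tends to $t<0$. Because $s_n\to0$, the first factor has all its limit points in $[\underline d_\mu(x),\overline d_\mu(x)]$. Multiplying by the negative number $t$ reverses the order. This gives
\[
\limsup_{n}\frac{\log\mu(B(x,s_n))}{n}\le t\,\underline d_\mu(x)
\quad\andt\quad
\liminf_{n}\frac{\log\mu(B(x,s_n))}{n}\ge t\,\overline d_\mu(x).
\]
These are the inequalities in which passing to a subsequence of radii can only lose information.

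\emph{Reverse inequalities by sandwiching.} Here the sequence $(s_n)$ need not be monotone, so we must show it is dense enough in logarithmic scale. Fix $\delta\in(0,1)$. For small $r>0$ set
\[
m(r):=\left\lceil(1+\delta)\frac{\log r}{t}\right\rceil
\quad\andt\quad
m'(r):=\left\lfloor(1-\delta)\frac{\log r}{t}\right\rfloor .
\]
Both tend to $\infty$ as $r\to0$. Since $\log s_n=nt+o(n)$, we get $\log s_{m(r)}\le(1+\delta/2)\log r<\log r$ and $\log s_{m'(r)}\ge(1-\delta/2)\log r>\log r$ once $r$ is small enough. Monotonicity of ball mass then gives
\[
\mu(B(x,s_{m(r)}))\le\mu(B(x,r))\le\mu(B(x,s_{m'(r)})).
\]
Now choose $r_k\to0$ with $\log\mu(B(x,r_k))/\log r_k\to\underline d_\mu(x)$. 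Dividing by $\log r_k<0$ and using $m(r_k)/\log r_k\to(1+\delta)/t$, we obtain
\[
\underline d_\mu(x)\ge\frac{1+\delta}{t}\limsup_k\frac{\log\mu(B(x,s_{m(r_k)}))}{m(r_k)}.
\]
Hence $t\,\underline d_\mu(x)\le(1+\delta)\limsup_n\log\mu(B(x,s_n))/n$. The analogous argument with $m'$ and a sequence realizing $\overline d_\mu(x)$ gives $t\,\overline d_\mu(x)\ge(1-\delta)\liminf_n\log\mu(B(x,s_n))/n$. Letting $\delta\to0$ yields the two equalities.

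The main obstacle is the bookkeeping in this sandwich step. The sign of $t$ and of $\log r$ must be tracked carefully, since every division by a negative quantity flips inequalities. The infinite cases also need checking: if $\overline d_\mu(x)=\infty$ or $\underline d_\mu(x)=\infty$, the same inequalities are read with $t\cdot\infty=-\infty$ and remain valid because all quantities involved are nonpositive.
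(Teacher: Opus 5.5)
Your easy inequalities are correct and match the paper's first step, and the sandwich is the right idea. However, in the reverse step you use the wrong half of the sandwich in both cases, and the inequalities you arrive at are false. From $\mu(B(x,s_{m(r_k)}))\le\mu(B(x,r_k))$, dividing the logarithms by $m(r_k)>0$ gives
\[
\limsup_{k\to\infty}\frac{\log\mu(B(x,s_{m(r_k)}))}{m(r_k)}\le\lim_{k\to\infty}\frac{\log r_k}{m(r_k)}\cdot\frac{\log\mu(B(x,r_k))}{\log r_k}=\frac{t}{1+\delta}\,\underline d_\mu(x).
\]
That is, $\underline d_\mu(x)\le\frac{1+\delta}{t}\limsup_k(\cdots)$, the reverse of what you wrote; one of the sign reversals was dropped. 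An upper bound on a limsup along a subsequence can never bound $\limsup_n$ from below, so this is just the easy direction again. Your conclusion $t\,\underline d_\mu(x)\le(1+\delta)\limsup_n\frac{\log\mu(B(x,s_n))}{n}$ is in fact false whenever that limsup equals $t\,\underline d_\mu(x)<0$. For normalized Riemannian volume and $s_n=e^{tn}$ it reads $t\,d_M\le(1+\delta)t\,d_M$. The same problem occurs for $\overline d_\mu(x)$: the inclusion $B(x,r_k)\subseteq B(x,s_{m'(r_k)})$ only yields $\liminf_k\frac{\log\mu(B(x,s_{m'(r_k)}))}{m'(r_k)}\ge\frac{t}{1-\delta}\,\overline d_\mu(x)$. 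Your claim $t\,\overline d_\mu(x)\ge(1-\delta)\liminf_n(\cdots)$ fails in the same example.

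The fix is to swap the two halves. A lower bound on $\limsup_n$ needs large masses, hence radii exceeding $r_k$, where $r_k$ realizes $\underline d_\mu(x)$. With $m'(r_k)$ you get $\mu(B(x,s_{m'(r_k)}))\ge\mu(B(x,r_k))$, and hence $\limsup_n\frac{\log\mu(B(x,s_n))}{n}\ge\frac{t}{1-\delta}\,\underline d_\mu(x)$. An upper bound on $\liminf_n$ needs small masses, hence radii below $r_k$, where now $r_k$ realizes $\overline d_\mu(x)$. With $m(r_k)$ you get $\liminf_n\frac{\log\mu(B(x,s_n))}{n}\le\frac{t}{1+\delta}\,\overline d_\mu(x)$, which also holds when $\overline d_\mu(x)=\infty$. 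Letting $\delta\to0$ finishes the proof. This corrected version is precisely the paper's argument: it takes $n_k=\lfloor(1-\delta)\log r_k/t\rfloor$ (your $m'(r_k)$) for $\underline d_\mu(x)$ and treats $\overline d_\mu(x)$ analogously.
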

	\begin{proof}
		We prove the equality for $\underline d_\mu(x)$; the proof of the other equality is analogous. Since $s_n \to 0$ and $1/t < 0$, we see
		\begin{align*}
			\underline d_\mu(x)
			&\le\liminf_{n\to\infty}
			\frac{\log\mu(B(x,s_n))}{\log s_n}\\
			&=\liminf_{n\to\infty}
			\frac{n}{\log s_n}
			\frac{\log\mu(B(x,s_n))}{n}=\fr1t\limsup_{n\to\infty}
			\frac{\log\mu(B(x,s_n))}{n},
		\end{align*}
		which gives one inequality. For the other direction, first note if
		$\underline d_\mu(x)=\infty$ then the preceding inequality gives the
		result, so suppose otherwise. For $k\in\N$ choose $r_k>0$ with
		$r_k\to0$ and
		$
		{\log\mu(B(x,r_k))}/{\log r_k}\to\underline d_\mu(x).
		$
		Fix $\delta\in(0,1)$ and set
		$
		n_k:=
		\left\lfloor
		(1-\delta){\log r_k}/{t}
		\right\rfloor.
		$
		Then $n_k\to\infty$ and
		$$
		\frac{\log s_{n_k}}{\log r_k} = {\frac{\log s_{n_k}}{n_k}}{\frac{n_k}{\log r_k}}
		\to 1-\delta < 1.
		$$
		Since $\log r_k<0$, it follows
		$s_{n_k}>r_k$ for all sufficiently large $k$. In particular,
		\begin{align*}
			\limsup_{n\to\infty}
			\frac{\log\mu(B(x,s_n))}{n}
			&\ge\limsup_{k\to\infty}
			\frac{\log\mu(B(x,r_k))}{n_k} \\
			&=\lim_{k\to\infty}
			\fr{\log r_k}{n_k}
			\frac{\log\mu(B(x,r_k))}{\log r_k}=\frac t{1-\delta}\,\underline d_\mu(x).
		\end{align*}
		Since this holds for all $\delta>0$, we obtain as desired
		\[
		\limsup_{n\to\infty}
		\frac{\log\mu(B(x,s_n))}{n}
		=
		t\,\underline d_\mu(x). \qedhere
		\]
	\end{proof}

	In order to estimate Furstenberg entropy, we will approximate Radon-Nikodym derivatives by quotients of ball measures, and use a Maker's theorem-type argument. The next lemma verifies the necessary integrability hypotheses to do so. The proof is a modification of
	\cite[Lemma~8.3]{LL} and \cite[Lemma~9]{Lessa} from their projective setting.
	
	\begin{lemma}\label{lem:ball-quotient}
		There is $C<\infty$, depending only on $M$, such that
		whenever $\gamma_0\ll\gamma_1$ are Borel probability measures on $M$
		with $D(\gamma_0 \| \gamma_1) \coloneqq \int \log \frac{\dd\gamma_0}{\dd\gamma_1}\;\dd\gamma_0<\infty$, we have
		$
		\frac{\gamma_0(B(x,r))}{\gamma_1(B(x,r))}
		\to
		\frac{\dd\gamma_0}{\dd\gamma_1}(x)
		$
		for $\gamma_0$-almost every $x$ as $r\to0$, and
		\[
		\int_M
		\sup_{r > 0}
		\abs*{\log\frac{\gamma_0(B(x,r))}{\gamma_1(B(x,r))}}
		\,\dd\gamma_0(x)
		\le C\pr{1+D(\gamma_0 \| \gamma_1)}.
		\]
	\end{lemma}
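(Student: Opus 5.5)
We would split the quotient as $\log\frac{\gamma_0(B(x,r))}{\gamma_1(B(x,r))}$ into its positive and negative parts and bound each separately using Besicovitch-type maximal inequalities. Let $\rho\coloneqq\frac{\dd\gamma_0}{\dd\gamma_1}$. The pointwise convergence claim is the Besicovitch differentiation theorem on the Riemannian manifold $M$. That theorem holds because $M$ is compact, so it is covered by finitely many charts that are bi-Lipschitz to Euclidean balls, and hence it satisfies a Besicovitch covering property for small radii. Large radii $r\ge r_0$ are harmless: there the ratio stays bounded by a constant depending on $\gamma_1(B(x,r_0))$, and we can absorb them as described below. The convergence holds $\gamma_1$-a.e., hence $\gamma_0$-a.e. since $\gamma_0\ll\gamma_1$. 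The main work is the integral bound.

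\emph{Positive part.} Let $\mathcal M\rho(x)\coloneqq\sup_{r}\frac{1}{\gamma_1(B(x,r))}\int_{B(x,r)}\rho\,\dd\gamma_1$ be the maximal function with respect to $\gamma_1$. The Besicovitch covering theorem gives a weak-type bound $\gamma_1(\mathcal M\rho>t)\le \frac{C_1}{t}\int_{\{\rho>t/2\}}\rho\,\dd\gamma_1$, where $C_1$ depends only on $M$ through its Besicovitch constant. We then need $\int\log^+\mathcal M\rho\,\dd\gamma_0=\int\rho\log^+\mathcal M\rho\,\dd\gamma_1$. We would estimate this by the standard $L\log L$ argument, using the inequality $ab\le a\log a + e^{b-1}$ and the layer-cake formula, or more directly by computing $\gamma_0(\mathcal M\rho>t)$ via the same covering argument applied to $\gamma_0$ itself. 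The second route gives $\gamma_0(\mathcal M\rho>t)\le C_1\gamma_0(\rho>t/2)\cdot(\text{const})+\ldots$. Integrating against $\dd t/t$ over $t\ge1$ yields $C(1+\int\log^+\rho\,\dd\gamma_0)$. Finally, $\int\log^+\rho\,\dd\gamma_0\le D(\gamma_0\|\gamma_1)+\int\log^-\rho\,\dd\gamma_0$, and the last term is at most $\int\rho\log^-\rho\,\dd\gamma_1\le e^{-1}$.

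\emph{Negative part.} This is the step we expect to be the main obstacle: we must control $\gamma_0(\{x:\inf_r \gamma_0(B(x,r))/\gamma_1(B(x,r))<e^{-t}\})$. We would prove it decays like $C_1e^{-t}$. The argument: for each $x$ in the set, choose a ball $B(x,r_x)$ with $\gamma_0(B)<e^{-t}\gamma_1(B)$. Besicovitch then gives a subcover splitting into $C_1$ disjoint subfamilies. Summing over these,
\[
\gamma_0(\text{set})\le\sum\gamma_0(B_i)\le e^{-t}\sum\gamma_1(B_i)\le C_1e^{-t}.
\]
For radii beyond the Besicovitch scale $r_0$, the compactness of $M$ gives finitely many balls, so the same bound holds after enlarging $C_1$. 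Integrating over $t\ge0$ gives $\int\sup_r\log^-(\cdot)\,\dd\gamma_0\le C_1$, with no dependence on $D$.

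Combining the two parts, the supremum of the absolute value is at most the sum of the positive and negative suprema, so the total bound is $C(1+D(\gamma_0\|\gamma_1))$ with $C$ depending only on the Besicovitch constant of $M$. We need to check carefully that sets of the form $\{\mathcal M\rho>t\}$ are measurable; restricting to rational radii suffices by monotonicity and continuity of the ball masses from one side. We also need to check that the covering constant is uniform over all radii. To get this, we would handle radii $\ge r_0$ separately: for such radii, $\gamma_i(B(x,r))$ is comparable to the mass of a fixed finite collection of balls, which changes the constant but not the form of the bound.
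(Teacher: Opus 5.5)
Your proposal is correct. The negative part is exactly the paper's argument. Your first suggested route for the positive part is also essentially the paper's. The paper proves the $L\log L$ maximal bound $\int_M\mathcal M\rho\,\dd\gamma_1\le 1+2N_B\pr{D(\gamma_0\|\gamma_1)+e^{-1}+\log 2}$ from the $\gamma_1$-weak-type estimate. It then applies Jensen, $\int\log\mathcal M\rho\,\dd\gamma_0\le\int\log\rho\,\dd\gamma_0+\log\int(\mathcal M\rho/\rho)\,\dd\gamma_0\le D(\gamma_0\|\gamma_1)+\log\int\mathcal M\rho\,\dd\gamma_1$, where you would use Young's inequality instead.

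Your second route is genuinely different, and it closes with no extra terms. If $\gamma_0(B)>t\gamma_1(B)$, then $\gamma_0(B\cap\{\rho\le t/2\})\le\frac t2\gamma_1(B)<\frac12\gamma_0(B)$, so $\gamma_0(B)<2\gamma_0(B\cap\{\rho>t/2\})$. Bounded overlap then gives $\gamma_0(\{\mathcal M\rho>t\})\le 2N_B\,\gamma_0(\{\rho>t/2\})$. Integrating against $\dd t/t$ over $t\ge1$ (recall $\mathcal M\rho\ge1$) gives $\int\log\mathcal M\rho\,\dd\gamma_0\le 2N_B\int\log^+(2\rho)\,\dd\gamma_0\le 2N_B\pr{\log2+D(\gamma_0\|\gamma_1)+e^{-1}}$. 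This $\gamma_0$-weighted weak-type estimate is shorter and never needs $\mathcal M\rho\in L^1(\gamma_1)$. The paper's Jensen route yields the sharper shape $D+O(\log(1+D))$, with coefficient $1$ on $D$. Only the form $C(1+D)$ is used in Corollary~\ref{cor:psi}, so either suffices.

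Two smaller remarks. First, your separate handling of radii above the covering scale is more explicit than the paper, which uses one covering constant for balls of all radii without comment. To make it precise, take the points that admit no good radius below your $r_0$, and choose a maximal $r_0$-separated subset of them. It has at most $K(M,r_0)$ elements, and its balls, all of radius at least $r_0$, cover the other such points. The overlap constant then becomes $N_B+K$.

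Second, your reason for the covering property is not valid as stated. Bi-Lipschitz charts alone do not give it, since the Besicovitch covering property is not invariant under bi-Lipschitz changes of metric in general. What is true, and what the paper's appeal to Federer relies on, is that a smooth Riemannian metric is directionally limited at small scales (Federer, \S2.8). That yields both the covering theorem for small radii and the differentiation theorem.
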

	\begin{proof}
		The first assertion is the Besicovitch differentiation theorem for
		finite Borel measures  (see e.g. Federer \cite[2.8.14--2.8.15]{Federer}). For the maximal inequality, we first note for $\gamma_0$-almost every $x$ that $\gamma_0(B(x, r))$ and $\gamma_1(B(x, r))$ are both positive for all $r>0$. Moreover, the suprema below may be restricted to
		$r\in\Q_{>0}$, since $\gamma_i(B(x,q))\to\gamma_i(B(x,r))$ as
		$q\to r$ from below. In particular, the functions
		\[
		W^-(x):=\sup_{r>0}\log
		\frac{\gamma_1(B(x,r))}{\gamma_0(B(x,r))} \;\;\andt\;\;
		W^+(x):=\sup_{r>0}\log
		\frac{\gamma_0(B(x,r))}{\gamma_1(B(x,r))}
		\]
		are Borel and defined $\gamma_0$-almost everywhere. Since $B(x,r)=M$ for $r>\diam M$, we also have
		$W^\pm\ge0$. It suffices to show that $\int W^\pm \,\dd\gamma_0 \le C\pr{1+D(\gamma_0 \| \gamma_1)}$.
		
		Fix $t\ge0$
		and set $E_t:=\set*{x \in M\st W^-(x)>t}$. For each $x\in E_t$, pick $s_x>0$ such
		that
		$
		\gamma_0(B(x,s_x))
		<
		e^{-t}\gamma_1(B(x,s_x)).
		$ Let $N_B$ be the Besicovitch covering constant of $M$. 
		By the Besicovitch covering theorem, there is a countable subfamily
		$\{B_i\} \sseq \{B(x, s_x)\}$ covering $E_t$ with $\sum_i \vchi_{B_i} \leqq N_B$, so we see
		\[
		\gamma_0(E_t)
		\le \sum_i\gamma_0(B_i)
		<e^{-t}\sum_i\gamma_1(B_i)
		\le N_Be^{-t}.
		\]
		The layer-cake theorem then gives
		$
		\int_MW^-\,\dd\gamma_0
		=
		\int_0^\infty\gamma_0\pr{E_t}\,\dd t
		\le
		N_B.
		$

		It remains to bound $\int_M W^+ \,\dd\gamma_0$. Set $J=\dd\gamma_0/\dd\gamma_1$ and define
		\[
		M_{\gamma_1}J(x)
		\coloneqq
		\sup_{r>0}
		\frac{1}{\gamma_1(B(x,r))}
		\int_{B(x,r)}J\,\dd\gamma_1,
		\]
		with the convention $0/0=0$. Fix $s\ge1$ and set
		$E_s=\{M_{\gamma_1}J>s\}$. For each $x\in E_s$, choose
		$r_x>0$ such that
		$
		\int_{B(x,r_x)}J\,\dd\gamma_1
		>
		s\,\gamma_1(B(x,r_x)).
		$
		Since $J\vchi_{\{J\le s/2\}}\le s/2$, we see
		\[
		\int_{B(x,r_x)}
		J\vchi_{\{J>s/2\}}\,\dd\gamma_1
		>
		\frac{s}{2}\gamma_1(B(x,r_x)).
		\]
		By the Besicovitch covering theorem, there is a countable subfamily
		$\{B_j\}$ of the balls $\{B(x,r_x)\}$ that covers $E_s$ and has
		multiplicity at most $N_B$, and so
		\begin{equation}\label{eq:maximal-tail}
			\gamma_1(E_s)
			\le \sum_j\gamma_1(B_j) <
			\frac{2}{s}\sum_j
			\int_{B_j}J\vchi_{\{J>s/2\}}\,\dd\gamma_1 \le
			\frac{2N_B}{s}
			\int_{\{J>s/2\}}J\,\dd\gamma_1 .
		\end{equation}
		
		Since $B(x,r)=M$ for $r>\diam M$, we have
		$M_{\gamma_1}J \ge 1$. Thus applying the layer-cake theorem, \eqref{eq:maximal-tail}, and
		Tonelli's theorem, we obtain
		\[
		\begin{aligned}
			\int_M M_{\gamma_1}J\,\dd\gamma_1
			&\le
			1+2N_B\int_1^\infty
			\frac1s\int_{\{J>s/2\}}J\,\dd\gamma_1\,\dd s \\
			&=
			1+2N_B\int_M
			J\log(2J)\vchi_{\{J>1/2\}}\,\dd\gamma_1.
		\end{aligned}
		\]
		Since $-t\log t\le e^{-1}$ on $[0,1]$ and
		$\int_MJ\,\dd\gamma_1=1$, we can further bound
		\begin{align*}
			&\int_{M} J \log(2J)\vchi_{\set*{J > 1/2}} \;\dd\gamma_1\\ =\; 
			&\int_{M} J \log J \;\dd\gamma_1
			- \int_{M} J \log J \vchi_{\set*{J \le 1/2}} \;\dd\gamma_1+ \log 2 \int_{M} J \vchi_{\set*{J > 1/2}}\;\dd\gamma_1 \\
			\le\; &{ D(\gamma_0 \| \gamma_1)+ e^{-1} + \log 2 }.
		\end{align*}
		Consequently,
		\begin{equation}\label{eq:maximal-L1}
			\int_M M_{\gamma_1}J\,\dd\gamma_1
			\le
			1+2N_B\bigl(D(\gamma_0\Vert\gamma_1)+e^{-1}+\log2\bigr).
		\end{equation}
		
		For every $x\in M$ and $r>0$ with $\gamma_1(B(x,r))>0$, we note $W^+\le\log M_{\gamma_1}J$. Since $J>0$
		$\gamma_0$-a.e., Jensen's inequality and \eqref{eq:maximal-L1} give
		\begin{align*}
			\int_M W^+\,\dd\gamma_0
			&\le \int \log J \,\dd\gamma_0 + \int_M \log \frac{M_{\gamma_1}J}{J}\,\dd\gamma_0
			\\
			&\le
			D(\gamma_0\Vert\gamma_1)
			+\log \int_M\frac{M_{\gamma_1}J}{J}\,\dd\gamma_0 \\
			&\le
			D(\gamma_0\Vert\gamma_1)
			+\log\int_M M_{\gamma_1}J\,\dd\gamma_1 \le
			C\left(1+D(\gamma_0\Vert\gamma_1)\right). 
		\end{align*}
		for some $C > 0$, which shows the desired result.
	\end{proof}
	
	\begin{corollary}\label{cor:psi}
		Using the notation in Section \ref{twoside}, there is some $\Psi \in L^1(\mu_\eta)$ such that $|\psi_r(z)|+|\psi(z)|\le \Psi(z)$ for all $0<r\le r_0$.
	\end{corollary}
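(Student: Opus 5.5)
The plan is to take $\Psi(z) := 2\sup_{r>0}\abs*{\log\frac{\gamma_0(B(x_0,r))}{\gamma_1(B(x_0,r))}}$ on $X_0$ (and $\Psi:=0$ off $X_0$). Here $\gamma_0 = (f_{-1})_*\mu$ and $\gamma_1=\mu$ are the two measures attached to $z$, and $\gamma_0(B(x_0,r)) = \mu(f_{-1}^{-1}B(x_0,r))$, so $\abs{\psi_r(z)}$ is bounded by this supremum for every $r\in(0,r_0]$. For $\mu_\eta$-a.e.\ $z$ the limit $\psi(z)$ is a limit of such $\psi_{1/n}$, hence also bounded by the same supremum. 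This gives $\abs{\psi_r}+\abs{\psi}\le\Psi$ almost everywhere, and we may redefine $\Psi$ to be $+\infty$ on a null set, or simply enlarge it there, so that the bound holds everywhere. Measurability of $\Psi$ follows because the supremum may be restricted to rational $r$, exactly as in the proof of Lemma~\ref{lem:ball-quotient}, and $(f,x,r)\mapsto\mu(f^{-1}B(x,r))$ is Borel.

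For integrability, apply \eqref{eq:integral}, using monotone convergence to pass from bounded $\varphi$ to nonnegative ones. This gives
\[
\int_X\Psi\,\dd\mu_\eta=2\int_G\int_M \sup_{r>0}\abs*{\log\frac{f_*\mu(B(f(x),r))}{\mu(B(f(x),r))}}\,\dd\mu(x)\dd\eta(f) .
\]
Changing variables $y=f(x)$ turns the inner integral into an integral against $\gamma_0=f_*\mu$. By Lemma~\ref{etalem} (via Lemma~\ref{furstconv}), for $\eta$-a.e.\ $f$ we have $f_*\mu\ll\mu$ with $D(f_*\mu\|\mu)<\infty$. Lemma~\ref{lem:ball-quotient} then bounds the inner integral by $C(1+D(f_*\mu\|\mu))$. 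Integrating over $f$, we get $\int\Psi\,\dd\mu_\eta\le 2C(1+h^{\mathrm F}_\mu(\eta))$, which is finite since $h^{\mathrm F}_\mu(\eta)=(N+\frac12)h^{\mathrm F}_\mu(\nu)<\infty$ by Crauel's bound.

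The main point to handle carefully is the null-set bookkeeping. On $X_0$, positivity of $\mu(B(x_0,r))$ holds since $x_0\in\supp\mu$. The positivity of $\gamma_0(B(x_0,r))$, and thus finiteness of $\psi_r$, only holds for $\gamma_0$-a.e.\ point. By \eqref{eq:integral}, however, this is $\mu_\eta$-a.e.\ $z$. So $\Psi$ is finite almost everywhere, and setting $\Psi=\infty$ elsewhere keeps $\Psi\in L^1$ in the extended sense. If needed, we can instead modify on a null set where all the $\psi_r$ are set to zero.
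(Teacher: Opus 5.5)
Your proposal is correct and follows the paper's proof. The paper takes the slightly smaller majorant $\Psi=2\sup_{q\in\Q\cap(0,r_0]}\abs{\psi_q}$ and bounds its integral by $2C(1+h_\mu^{\mathrm F}(\eta))$ using \eqref{eq:integral} and Lemma~\ref{lem:ball-quotient}, just as you do. Your null-set caution about the numerator is unnecessary: on $X_0$, the point $x_{-1}=f_{-1}^{-1}(x_0)$ lies in $\supp\mu$, so $\mu(f_{-1}^{-1}B(x_0,r))>0$ for every $r$.
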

	
	\begin{proof}
		For $z \in X$, set $$\Psi(z):=2\sup_{q\in\Q\cap(0,r_0]}|\psi_q(z)|.$$ Note $\Psi$ is Borel as a countable supremum of Borel functions. For every $r\in(0,r_0]$, choose
		$q_j\in\Q\cap(0,r]$ with $q_j \to r$ monotonically. Continuity from below of the measure gives
		$\psi_{q_j}(z)\to\psi_r(z)$ whenever $z \in X_0$, and so
		$
		|\psi_r(z)|\le \Psi(z)/2.
		$
		Letting $r\to0$ also show $|\psi(z)|\le \Psi(z)/2$, so
		$|\psi_r|+|\psi|\le \Psi$.

		Thus using Lemma~\ref{lem:ball-quotient} and \eqref{eq:integral}, we have as desired
		\begin{align*}
			\int_X \Psi \;\dd\mu_\eta &= 2\int_{G \times M} \sup_{q\in\Q\cap(0,r_0]} \abs*{\log\frac{\mu(f^{-1}B(x,q))}{\mu(B(x,q))}} \;\dd \pr{f_\ast\mu}(x) \dd\eta(f)\\
			&\le
			2C\int_G\left(1+D(f_*\mu\|\mu)\right)\dd\eta(f)
			=
			2C\pr{1+h_\mu^{\mathrm{F}}(\eta)}<\infty. \qedhere
		\end{align*}
	\end{proof}

	As mentioned, we have the following Maker's theorem style argument as in \cite{Maker}.
	
	\begin{lemma}\label{lem:maker-average}
		For $\mu_\eta$-almost every $z\in X$, we have
		\[
		\frac1n\sum_{j=1}^n
		\psi_{r_{n,j}^\pm}(F^{-n+j}z)
		\longrightarrow h_\mu^{\mathrm{F}}(\eta).
		\]
	\end{lemma}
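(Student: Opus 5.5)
We compare the average with the Birkhoff average of $\psi$ and control the error with a Maker-type truncation. First, since $(X,\mu_\eta,F)$ is ergodic and invertible, and $\psi\in L^1(\mu_\eta)$ with $\int\psi\,\dd\mu_\eta=h_\mu^{\mathrm F}(\eta)$, Birkhoff's theorem for $F^{-1}$ gives
\[
\frac1n\sum_{j=1}^n\psi(F^{-n+j}z)=\frac1n\sum_{i=0}^{n-1}\psi(F^{-i}z)\longrightarrow h_\mu^{\mathrm F}(\eta)
\]
for $\mu_\eta$-a.e.\ $z$. It therefore suffices to show
\[
\frac1n\sum_{j=1}^n\abs*{\psi_{r_{n,j}^\pm}(F^{-n+j}z)-\psi(F^{-n+j}z)}\longrightarrow0 .
\]

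For $\rho\in(0,r_0]$, set
\[
\Phi_\rho(z):=\sup_{0<r\le\rho}\abs{\psi_r(z)-\psi(z)}.
\]
Following Corollary~\ref{cor:psi}, $\Phi_\rho$ is Borel: restrict to rational $r$ and use continuity from below. We have $0\le\Phi_\rho\le\Psi$. Moreover, $\Phi_\rho\downarrow0$ a.e.\ as $\rho\downarrow0$, because $\psi_r\to\psi$ a.e.\ by the Besicovitch differentiation discussion preceding the definition of $\psi$ (Lemma~\ref{lem:ball-quotient}). Dominated convergence then gives $\int\Phi_\rho\,\dd\mu_\eta\to0$.

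Fix $\theta\in(0,1)$ and $\rho>0$. Split the sum at $j=\lceil\theta n\rceil$.

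For $j<\lceil\theta n\rceil$, bound each term by $\Psi(F^{-n+j}z)$. The contribution is then
\[
\frac1n\sum_{i=n-\lceil\theta n\rceil+1}^{n-1}\Psi(F^{-i}z),
\]
a difference of Birkhoff sums for $\Psi$ under $F^{-1}$. This tends to $\theta\int\Psi\,\dd\mu_\eta$ a.e., since $\frac1n S_n\Psi-\frac1n S_{n-\lceil\theta n\rceil}\Psi\to\theta\int\Psi$.

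For $j\ge\lceil\theta n\rceil$, Lemma~\ref{lem:maker} gives $\max_{j\ge\lceil\theta n\rceil}r_{n,j}^\pm(z)\le\rho$ for $n$ large (depending on $z$). Each term is then at most $\Phi_\rho(F^{-n+j}z)$, and the contribution is at most
\[
\frac1n\sum_{i=0}^{n-1}\Phi_\rho(F^{-i}z)\to\int\Phi_\rho\,\dd\mu_\eta .
\]

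Hence the $\limsup$ of the error is at most $\theta\int\Psi+\int\Phi_\rho$. Intersect the conull sets over rational $\theta$ and $\rho=1/k$, then let $\rho\to0$ and $\theta\to0$.

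The main subtlety is that the radii $r_{n,j}^\pm$ depend on both $n$ and $j$, so the standard Maker's theorem does not apply directly. The truncation at $\theta n$ is what handles this: the early terms, whose radii may not be small, are controlled only by the integrable dominator $\Psi$ and cost a factor $\theta$. Only the late terms use the smallness of the radii, and this smallness is uniform in $j$ by Lemma~\ref{lem:maker}. One must also check that the a.e.\ convergence $\psi_r\to\psi$ holds along all real $r\to0$ and not just $r=1/n$. This follows from the Besicovitch statement in Lemma~\ref{lem:ball-quotient}, applied with $\gamma_0=(f_{-1})_*\mu$ and $\gamma_1=\mu$ via \eqref{eq:integral}.
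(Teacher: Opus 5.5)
Your proof is correct and matches the paper's argument step for step. Your $\Phi_\rho$ is the paper's $\Delta_\delta$, obtained the same way as a supremum over rational radii via continuity from below. The rest is also identical: the split at $j=\lceil\theta n\rceil$, the dominator $\Psi$ on the early terms, $\Phi_\rho$ on the late terms, Birkhoff's theorem for $F^{-1}$, and then $\rho\to0$ followed by $\theta\to0$. One minor point: your early-block bound uses $S_{n-\lceil\theta n\rceil}$ where the exact sum would give $S_{n-\lceil\theta n\rceil+1}$. This is harmless, since $\Psi\ge0$ and the larger bound has the same limit $\theta\int\Psi\,\dd\mu_\eta$.
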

	
	\begin{proof}
		Let $\Psi$ be as in Corollary \ref{cor:psi}.
		For $0<\delta\le r_0$, set
		$$
		\Delta_\delta(z):=
		\sup_{q\in\Q\cap(0,\delta]}|\psi_q(z)-\psi(z)|.
		$$
		Since $\psi_r \to \psi$, $\Delta_\delta(z) \to 0$, $\mu_\eta$-almost everywhere, and
		$0\le\Delta_\delta(z) \le\Psi(z)$ for $\mu_\eta$-almost every $z$. By the dominated convergence
		theorem,
		$
		\int_X\Delta_\delta\,\dd\mu_\eta\to0
		$
		as $\delta\to 0$. Continuity from below of the measure also
		gives $|\psi_r-\psi|\le\Delta_\delta$ whenever $0<r\le\delta$.
		
		Fix $\theta\in(0,1)$. By Lemma~\ref{lem:maker}, for almost every
		$z$ and all sufficiently large $n$,
		$r_{n,j}^\pm(z)\le\delta$ whenever
		$\lceil\theta n\rceil\le j\le n$. Thus
		\begin{align*}
			\frac1n\sum_{j=1}^n
			\left|
			\psi_{r_{n,j}^\pm}(F^{-n+j}z)
			-\psi(F^{-n+j}z)
			\right|
			&\le \frac1n\sum_{0 \le j<\lceil\theta n\rceil}
			\Psi(F^{-n+j}z)\\
			&\quad+\frac1n\sum_{j=\lceil\theta n\rceil}^n
			\Delta_\delta(F^{-n+j}z).
		\end{align*}
		
		Taking $\limsup$ and applying Birkhoff's ergodic theorem, we get
		\begin{align*}
			&\limsup_{n\to\infty}\frac1n\sum_{j=1}^n
			\left|
			\psi_{r_{n,j}^\pm}(F^{-n+j}z)
			-\psi(F^{-n+j}z)
			\right|\le \theta\int_X\Psi\,\dd\mu_\eta+
			\int_X\Delta_\delta\,\dd\mu_\eta,
		\end{align*}
		 for $\mu_\eta$-almost every $z$.
		Letting first $\delta\to 0$ and then $\theta\to 0$ shows
		that the left hand side above goes to $0$ as $n \to \infty$. So we see again by Birkhoff's theorem that
		\begin{align*}
			\lim_{n\to\infty}\frac1n\sum_{j=1}^n
			\psi_{r_{n,j}^\pm}(F^{-n+j}z)
			= \lim_{n\to\infty}\frac1n\sum_{j=1}^n\psi(F^{-n+j}z)=\int_X\psi\,\dd\mu_\eta=h_\mu^{\mathrm{F}}(\eta),
		\end{align*}
		for $\mu_\eta$-almost every $z$ as desired.
	\end{proof}
	
	\subsection{Proof of Theorem \ref{thm:main}} We again write $z = \pr{(f_k)_{k \in \Z}, x}$ and use the same notation as in Section \ref{twoside}. For every $n\ge1$, note $\pr{\pi_M \circ F^{-n}}_*\mu_\eta=\mu$ and $\fr1n{\log r_{n,0}^\pm} \to 0$ by Lemma~\ref{lem:maker}, and so applying Lemma~\ref{lem:endpoint-mass} with $g_n = \pi_M \circ F^{-n}$ and $s_n=r_{n,0}^\pm$, we have
	\[
	\fr1n\log\mu\pr{B\pr{\pi_M \circ F^{-n}(z),r_{n,0}^\pm}} = \fr1n\log\mu\pr{B\pr{x_{-n},r_{n,0}^\pm}}\to0
	\]
	for $\mu_\eta$-almost every $z$. We also see $\fr1n \log r_{n,n}^\pm\to A^\pm<0$ by Lemma~\ref{lem:maker}, so using Lemma~\ref{lem:interpolation} with $s_n = r_{n,n}^\pm$ gives
	\begin{align*}
		A^-{\underline d_\mu(x_0)}
		&=\limsup_{n\to\infty}\frac{\log\mu\pr{B\pr{x_0,r_{n,n}^-}}}{n}, \andt\\
		A^+{\overline d_\mu(x_0)}
		&=\liminf_{n\to\infty}\frac{\log\mu\pr{B\pr{x_0,r_{n,n}^+}}}{n},
	\end{align*}
	for $\mu_\eta$-almost every $z$.
	Dividing by $n$, taking $\limsup$/$\liminf$ in Lemma~\ref{lem:telescope}, and applying Lemma~\ref{lem:maker-average}  therefore gives
	\[
	{A^-}{\underline d_\mu(x_0)}\le -h_\mu^{\mathrm{F}}(\eta)
	\quad \andt \quad
	{A^+}{\overline d_\mu(x_0)}\ge -h_\mu^{\mathrm{F}}(\eta),
	\]
	for $\mu_\eta$-almost every $z$. The functions $\underline d_\mu$ and $\overline d_\mu$ are Borel, and $(\pi_M)_*\mu_\eta=\mu$, so the preceding inequalities hold for $\mu$-almost every $x\in M$. Lemma~\ref{lem:ball-scales} and the fact that $h_\mu^{\mathrm{F}}(\eta) = (N + \fr12)h_\mu^{\mathrm{F}}(\nu)$ shows
	\[
	\frac{h_\mu^{\mathrm{F}}(\nu)}{-\lambda+2\ep}
	\le\underline d_\mu(x)
	\le\overline d_\mu(x)
	\le\frac{h_\mu^{\mathrm{F}}(\nu)}{-\lambda-2\ep},
	\]
	for $\mu$-almost every $x$. Applying this to a sequence $\ep_m\to0$ and intersecting the corresponding conull sets, we obtain
	\[
	\underline d_\mu(x)=\overline d_\mu(x)=\frac{h_\mu^{\mathrm{F}}(\nu)}{-\lambda},
	\]
	for $\mu$-almost every $x$, as desired. \qed

\appendix

\section{Skew products over an ergodic base}
\label{app:general-base}

Let $M$ be as before and following the notation of \cite{BrownRodriguezHertz2026}, let $(\widehat\Omega,\widehat{\mathcal B},\widehat\nu)$ be a standard
probability space. Suppose $\widehat\theta:\widehat\Omega\to\widehat\Omega$ is an ergodic
$\widehat\nu$-preserving transformation and
$\omega\mapsto f_\omega\in G$ is measurable. Set
$\widehat X:=\widehat\Omega\times M$ and 
$\widehat F(\omega,x):=(\widehat\theta\omega,f_\omega(x))$.
Writing $\pi_{\widehat\Omega}(\omega,x)=\omega$, let $\widehat\mu$
be an ergodic $\widehat F$-invariant probability
measure such that \smash{$\pr{\pi_{\what \Omega}}_\ast \what \mu = \what\nu$}, with disintegration
\smash{$
\widehat\mu=\int_{\widehat\Omega}
\delta_\omega\otimes\widehat\mu_\omega\,\dd\widehat\nu(\omega).
$}

Assume
\smash{$
\omega\mapsto
\log[f_\omega]_{C^1}+\log[f_\omega^{-1}]_{C^1}
\in L^1(\widehat\nu).
$} Set $f_\omega^{(n)} = f_{\widehat\theta^{n-1}\omega}\circ\cdots\circ f_\omega$ for $n \ge 1$.
Similar to the introduction, by Kingman's theorem we know there are $\lambda_{\mathrm{top}}$ and $\lambda_{\mathrm{bot}}$ such that for $\what\mu$-almost every
	\smash{$\pr{\omega,x}\in\what X$},
	\begin{align*}
		\lim_{n\to\infty}\frac1n
		\log\norm*{D_xf_\omega^{(n)}}
		=\lambda_{\mathrm{top}}, \quad \andt \quad
		\lim_{n\to\infty}\frac1n
		\log\m\pr{D_xf_\omega^{(n)}}
		=\lambda_{\mathrm{bot}}.
	\end{align*}
Following \cite[\S3.9]{BrownRodriguezHertz2026}, set
$
h^{\mathrm{F}}(\what \mu):=\int_{\widehat\Omega}
D\pr{(f_\omega)_*\widehat\mu_\omega
\big\|\widehat\mu_{\widehat\theta\omega}}\,\dd\widehat\nu(\omega).$
In the i.i.d.\ setting of Theorem~\ref{thm:main},
$\widehat\mu_\omega=\mu$ almost everywhere and
$h^{\mathrm{F}}(\what \mu)=h_\mu^{\mathrm{F}}(\nu)$.

\begin{theorem}\label{thm:general-base}
Suppose that
$\lambda_{\mathrm{top}}=\lambda_{\mathrm{bot}}=\lambda<0$.
Then, for $\widehat\nu$-almost every $\omega$, the measure
$\widehat\mu_\omega$ is exact dimensional and
$\dim(\widehat\mu_\omega)=h^{\mathrm{F}}(\widehat\mu)/(-\lambda)$.
\end{theorem}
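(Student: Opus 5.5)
We follow the proof of Theorem~\ref{thm:main} step by step, replacing $\mu$ at each time by the fibre measure $\widehat\mu_\omega$ and using the skew product $\widehat F$ in place of $F$.

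\textbf{Natural extension and fibre measures.} Pass to the natural extension of $(\widehat\Omega,\widehat\nu,\widehat\theta)$, so $\widehat\theta$ is invertible. Lift $\widehat\mu$ to the natural extension as well; the fibre measures of the original $\widehat\mu$ are then averages of the lifted fibres over the past. Equivariance gives $(f_\omega)_*\widehat\mu_\omega\ll\widehat\mu_{\widehat\theta\omega}$ whenever $h^{\mathrm F}(\widehat\mu)<\infty$. Finiteness comes from the analogue of Crauel's bound $h^{\mathrm F}(\widehat\mu)\le -\lambda_{\mathrm{bot}}\dim M$, which the generalized setting of \cite{BrownRodriguezHertz2026} supplies. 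Set
\[
\psi_r(\omega,x)=\log\frac{\widehat\mu_{\widehat\theta^{-1}\omega}\bigl(f_{\widehat\theta^{-1}\omega}^{-1}B(x,r)\bigr)}{\widehat\mu_\omega(B(x,r))}.
\]
Lemma~\ref{lem:ball-quotient}, applied with $\gamma_0=(f_{\widehat\theta^{-1}\omega})_*\widehat\mu_{\widehat\theta^{-1}\omega}$ and $\gamma_1=\widehat\mu_\omega$, gives an $L^1$ dominating function $\Psi$ as in Corollary~\ref{cor:psi}. It also gives the pointwise limit $\psi$, whose integral is $h^{\mathrm F}(\widehat\mu)$.

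\textbf{Changing the time scale.} The replacement of $\nu$ by $\eta=\tfrac12\nu^{*N}+\tfrac12\nu^{*(N+1)}$ has no direct analogue here. Instead, use the skew product over the base $\widehat\theta^N$ and pick ergodic components. Alternatively, handle the mismatch by working with time-$N$ blocks: define $a^\pm$ as in Lemma~\ref{lem:ball-scales} for the block maps $f^{(N)}_\omega$, and run the telescope along the orbit of $\widehat F^N$.

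Ergodicity of $\widehat F^N$ may fail. The standard fix is that $\widehat F^N$ has at most $N$ ergodic components, permuted cyclically by $\widehat F$. The Birkhoff limits are then a.e.\ equal to the component averages, and these are all equal by the cyclic symmetry, since $\widehat F$ maps one component to the next and preserves both the integrands (up to an index shift) and the averages. So Lemmas~\ref{lem:maker} and~\ref{lem:maker-average} survive with $A^\pm\approx N(\lambda\pm2\ep)$ and entropy average $N h^{\mathrm F}(\widehat\mu)$.

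Additivity of entropy under iteration, $h^{\mathrm F}$ of the time-$N$ system equals $N h^{\mathrm F}(\widehat\mu)$, follows from the chain-rule computation of Lemma~\ref{furstconv}, with $\widehat\mu_{\widehat\theta^k\omega}$ in place of $\mu$.

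\textbf{Telescoping.} Lemma~\ref{lem:telescope} goes through verbatim with the measure changing along the orbit. The mass at $x_0$ is measured by $\widehat\mu_\omega$, and the mass at $x_{-n}$ by $\widehat\mu_{\widehat\theta^{-n}\omega}$.

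\textbf{Main obstacle: the endpoint estimate.} Lemma~\ref{lem:endpoint-mass} needs $(g_n)_*\gamma=\mu$. Here the relevant statement is different: with $\gamma=\widehat\mu$ and $g_n=\pi_M\circ\widehat F^{-n}$, the term $\widehat\mu_{\widehat\theta^{-n}\omega}(B(x_{-n},s_n))$ involves the fibre measure at the current point. Redo the packing argument fibrewise. The packing bound~\eqref{eq:packing} holds for each fibre measure $\widehat\mu_{\omega'}$ with the same constant $C_0$, because it uses only the geometry of $M$. Integrating over $\omega'=\widehat\theta^{-n}\omega$ and using invariance of $\widehat\mu$ under $\widehat F^{-n}$ then gives $\widehat\mu(E_n)\le 2C_0e^{-n/(2m)}$, and Borel--Cantelli finishes as before.

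Lemma~\ref{lem:interpolation} is purely pointwise. Applied to $\widehat\mu_\omega$ at $x_0$, it yields
\[
\frac{h^{\mathrm F}(\widehat\mu)}{-\lambda+2\ep}\le\underline d_{\widehat\mu_\omega}(x)\le\overline d_{\widehat\mu_\omega}(x)\le\frac{h^{\mathrm F}(\widehat\mu)}{-\lambda-2\ep}
\]
for $\widehat\mu$-a.e.\ $(\omega,x)$. Disintegrating gives the bounds for $\widehat\mu_\omega$-a.e.\ $x$ and $\widehat\nu$-a.e.\ $\omega$, and letting $\ep\to0$ completes the proof.

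Two steps need care. One is measurability of $\omega\mapsto\widehat\mu_\omega(B(x,r))$, which follows from standard Borel properties of the disintegration. The other is that descending from the natural-extension fibres back to the original fibres preserves the dimension bounds. For the latter, we expect that the lifted fibres equal the original ones whenever $\widehat\mu$ is already defined on the invertible base; if this fails, prove the theorem on the extension first and then argue separately for the descent.
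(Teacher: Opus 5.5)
\textbf{Main gap: the fibre measures.} The problem is which fibre measures you use after passing to the natural extension. The lift of $\widehat\mu$ is invariant under an \emph{invertible} skew product. Uniqueness of disintegration then forces its fibres $m_\omega$ to satisfy $(f_\omega)_*m_\omega=m_{\theta\omega}$ exactly.

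Your $\psi_r$, written with $\widehat\theta^{-1}\omega$, and your closing ``descent'' remark both indicate that you build $\psi_r$ from these lifted fibres. In that case $\psi\equiv 0$ and $\int\psi=0$, not $h^{\mathrm F}(\widehat\mu)$, and the argument only shows that the \emph{lifted} fibres have dimension $0$. In the i.i.d.\ case these lifted fibres are the sample measures $\lim_n(f_{-1}\circ\cdots\circ f_{-n})_*\mu$, not $\mu$.

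Your expectation that lifted and original fibres coincide fails whenever $h^{\mathrm F}(\widehat\mu)>0$, since coincidence would force $(f_\omega)_*\widehat\mu_\omega=\widehat\mu_{\widehat\theta\omega}$. The fallback ``prove it on the extension, then descend'' cannot work either. Each $\widehat\mu_\omega$ is an average of lifted fibres over the past, and dimension bounds do not pass to such averages. This is exactly the sample-measure versus stationary-measure distinction drawn in the introduction.

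\textbf{The missing idea.} Use the natural extension of the skew product only to supply backward orbits $(\omega_k,x_k)_{k\in\mathbb{Z}}$ with $\omega_k$ in the \emph{original} base $\widehat\Omega$. Keep the original fibres $\widehat\mu_{\omega_k}$ throughout, so $\psi_r(z)=\log\bigl(\widehat\mu_{\omega_{-1}}(f_{-1}^{-1}B(x_0,r))/\widehat\mu_{\omega_0}(B(x_0,r))\bigr)$. Two facts then drive the proof.
\begin{enumerate}
\item Each $(\omega_k,x_k)$ is $\widehat\mu$-distributed. This handles the endpoint term and lands the final bound directly on $\widehat\mu_{\omega_0}$ at $x_0$, so no descent is needed.
\item Conditionally on $\omega_{-1}$, the point $x_0$ has law $(f_{\omega_{-1}})_*\widehat\mu_{\omega_{-1}}$ rather than $\widehat\mu_{\omega_0}$. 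This is \eqref{eq:general-integral}, and it is what yields $\int\psi=h^{\mathrm F}$. It is also why Crauel's bound is applied to $\tau^{-1}\widehat{\mathcal B}$.
\end{enumerate}

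\textbf{Smaller gap: the time change.} Cyclic symmetry equalizes the $\widehat F^N$-component averages only for additive cocycles. It does work for the block $\psi$, which by the chain rule is a sum of one-step $\psi$'s along the orbit. It does not work for the block radius functions $\phi=\log a^\pm(f^{(N)}_\omega,x)$. There $\int_{C_{i+1}}\phi=\int_{C_i}\phi\circ\widehat F$ with $\phi\circ\widehat F\neq\phi$, so component averages can differ. The global bound then controls each component only up to a factor of the number of components, which may grow with $N$.

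One repair is to note that $\underline d_{\widehat\mu_\omega}(x)$ and $\overline d_{\widehat\mu_\omega}(x)$ are a.e.\ $\widehat F$-invariant, because $f_\omega$ is bi-Lipschitz and $(f_\omega)_*\widehat\mu_\omega\ll\widehat\mu_{\widehat\theta\omega}$. They are therefore a.e.\ constant, so one good component for each inequality suffices. The paper avoids the issue instead with the random $\{N,N+1\}$ time change, which makes the block system ergodic outright.
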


We give a sketch of the proof, highlighting the changes from the proof of
Theorem~\ref{thm:main} and leaving the proof details to the reader.

\begin{proof}
In Lemma~\ref{furstconv}, let $(\Omega,{\mathcal B},\nu)$ be the natural extension of $(\what\Omega,\widehat{\mathcal B},\widehat\nu)$ with factor map $\tau: \Omega \to \what \Omega$. For every
$n\ge1$, we see
\smash{$
\log[(f_\omega^{(n)})^{\pm1}]_{C^1}
\le\sum_{j=0}^{n-1}
\log[f_{\widehat\theta^j\omega}^{\pm1}]_{C^1}.
$}
We then again get $h^{\mathrm{F}}(\widehat\mu)\le-d_M\lambda<\infty$ by \cite[Theorem 5.1]{Crauel93}, this time applied $\tau^{-1}\what \cB$ on the natural extension, and so $(f_\omega)_*\widehat\mu_\omega
\ll\widehat\mu_{\widehat\theta\omega}$ for $\what \nu$-almost every $\omega$. An analogous Radon--Nikodym chain rule argument
shows
\begin{equation}\label{eq:general-entropy-iterate}
h^{\mathrm{F}}(\what \mu, n):=\int_{\widehat\Omega}
D\pr{(f_\omega^{(n)})_*\widehat\mu_\omega
\big\|\widehat\mu_{\widehat\theta^n\omega}}
\,\dd\widehat\nu(\omega)=nh^{\mathrm{F}}(\what \mu).
\end{equation}

Fix $0 < \ep < -\lambda/4$. In the Kingman argument of
Lemma~\ref{etalem}, we see analogously
\[
\frac1n\int_{\widehat X}\log\m(D_xf_\omega^{(n)})\,
\dd\widehat\mu\longrightarrow\lambda,
\qquad
\frac1n\int_{\widehat X}\log\norm{D_xf_\omega^{(n)}}\,
\dd\widehat\mu\longrightarrow\lambda.
\]
Choose $N$ so both normalized integrals differ from $\lambda$ by
less than $\ep$ for $n=N,N+1$. Similar to the argument in
\cite[\S11.2]{EskinLindenstrauss}, set
$\Sigma:=\{N,N+1\}^{\N}$ and
$\beta:=(\tfrac12\delta_N+\tfrac12\delta_{N+1})^{\otimes\N}$,
and write $\zeta=(\zeta_j)_{j\in\N}\in\Sigma$.
On $\widehat\Omega_N:=\Sigma\times\widehat\Omega$, define
$\widehat\theta_N(\zeta,\omega)
:=(\sigma(\zeta),\widehat\theta^{\zeta_0}(\omega))$,
where $\sigma$ is the shift on $\Sigma$.
Also define
$\widehat F_N(\zeta,\omega,x):=
(\widehat\theta_N(\zeta,\omega),f_\omega^{(\zeta_0)}(x))$
and set $\widehat\nu_N:=\beta\otimes\widehat\nu$ and
$\widehat\mu_N:=\beta\otimes\widehat\mu$. We then see
\[
(\widehat F_N)_*\widehat\mu_N
=\beta\otimes\frac12\pr{h^{\mathrm{F}}(\widehat\mu,N)
+h^{\mathrm{F}}(\widehat\mu,N+1)}
=\widehat\mu_N.
\]
The desired bounds then follow just as in Lemma~\ref{etalem}:
\begin{align*}
\pr{N+\fr12}(\lambda-\ep)
&<\int\log\m(D_xf_\omega^{(\zeta_0)})\,
\dd\widehat\mu_N(\zeta,\omega,x)\\
&\le\int\log\norm{D_xf_\omega^{(\zeta_0)}}\,
\dd\widehat\mu_N(\zeta,\omega,x)
<\pr{N+\fr12}(\lambda+\ep).
\end{align*}
The fiber measure of $\widehat\mu_N$ over $(\zeta,\omega)$ is
$\widehat\mu_\omega$, so we see using \eqref{eq:general-entropy-iterate}
\[
h^{\mathrm{F}}(\widehat\mu_N)
=\tfrac12(h^{\mathrm{F}}(\what \mu, N)+h^{\mathrm{F}}(\what \mu, N + 1))
=\pr{N+\fr12}h^{\mathrm{F}}(\widehat\mu).
\]

For ergodicity, let $u\in L^2(\widehat\mu_N)$ be
$\widehat F_N$-invariant. For $z\in\widehat X$, set
$\overline{u}(z):=\int_\Sigma u(\zeta,z)\,\dd\beta(\zeta)$.
Then we see
\smash{$
\overline{u}
=
\frac12(
\overline{u}\circ\widehat F^N+
\overline{u}\circ\widehat F^{N+1}).
$}
The same strict-convexity argument as in Lemma~\ref{etalem} shows that
$\overline{u}$ is \smash{$\widehat F$}-invariant and hence constant.
Writing $s_m:=\zeta_0+\cdots+\zeta_{m-1}$, we see
$u(\zeta,z)=u(\sigma^m\zeta,\widehat F^{s_m}z)$, and so, conditioning with respect to the coordinate maps, we get
\smash{$
\mathbb E\left[u\mid z,\zeta_0,\ldots,\zeta_{m-1}\right]
=
\overline{u}(\widehat F^{s_m}z).
$} Sending $m \to \infty$ and using the martingale convergence then shows that $u$ is also
constant almost everywhere, proving ergodicity.

In Lemma~\ref{lem:ball-scales}, we replace integration of functions of
$(f,x)$ against $\eta\otimes\mu$ by integration of functions of
$(f_\omega^{(\zeta_0)},x)$ against $\widehat\mu_N$. In choosing the compact set $K\subseteq G$, we replace $\eta$ by the
pushforward of $\widehat\nu_N$ under
$(\zeta,\omega)\mapsto f_\omega^{(\zeta_0)}$. The rest of the proof proceeds as before, giving analogous $a^\pm$ and
$
A^\pm:=\int\log a^\pm(f_\omega^{(\zeta_0)},x)\,
\dd\widehat\mu_N(\zeta,\omega,x),
$
and we obtain
\begin{equation}\label{eq:general-scales}
\pr{N+\fr12}(\lambda-2\ep)<A^-\le A^+
<\pr{N+\fr12}(\lambda+2\ep)<0.
\end{equation}

We then pass to the natural extension
$(X_N,\mu_N,F_N)$ of
$(\widehat\Omega_N\times M,\widehat\mu_N,\widehat F_N)$,
with factor map $\pi:X_N\to\widehat\Omega_N\times M$.
For $z\in X_N$ and $k\in\Z$, write
\[
\pi F_N^k(z)
=
\pr{\zeta^{(k)},\omega_k,x_k},
\qquad
f_k
:=
f_{\omega_k}^{(\zeta^{(k)}_0)}.
\]
Thus $\zeta^{(k)}\in\Sigma$ is the auxiliary Bernoulli sequence at
time $k$, and we also get
\smash{$
\omega_{k+1}
=
\widehat\theta^{\zeta^{(k)}_0}(\omega_k)$} and 
\smash{$x_{k+1}
=
f_k(x_k).$}
We replace
\eqref{eq:integral} by
\begin{equation}\label{eq:general-integral}
\int_{X_N}\varphi(\zeta^{(-1)},\omega_{-1},x_0)\,\dd\mu_N
=\int_{\widehat\Omega_N}\int_M\varphi(\zeta,\omega,y)
\,\dd\pr{(f_\omega^{(\zeta_0)})_*\widehat\mu_\omega}(y)
\,\dd\widehat\nu_N(\zeta,\omega)
\end{equation}
for nonnegative Borel $\varphi$.
We then set
\[
\psi_r(z):=\log
\frac{\widehat\mu_{\omega_{-1}}(f_{-1}^{-1}B(x_0,r))}
{\widehat\mu_{\omega_0}(B(x_0,r))}.
\]
The definitions of $\psi$ and $\Psi$ are otherwise unchanged.
Applying Lemma~\ref{lem:ball-quotient} to
$(f_\omega^{(\zeta_0)})_*\widehat\mu_\omega$ and
$\widehat\mu_{\widehat\theta^{\zeta_0}\omega}$ and using
\eqref{eq:general-integral}, we similarly see $\psi_r\to\psi$ almost everywhere and, as in
Corollary~\ref{cor:psi},
\[
\int\psi\,\dd\mu_N=h^{\mathrm{F}}(\widehat\mu_N),
\qquad
\int\Psi\,\dd\mu_N\le2C\pr{1+h^{\mathrm{F}}(\widehat\mu_N)}.
\]

We keep the definition of $r_{n,j}^\pm$ from Section~\ref{twoside},
using the $f_k$ and $x_k$ above. In Lemma~\ref{lem:telescope}, we replace
$\mu(B(x_k,r))$ by $\widehat\mu_{\omega_k}(B(x_k,r))$.
The proofs of Lemmas~\ref{lem:maker} and~\ref{lem:maker-average}
are unchanged, with $(X,\mu_\eta,F)$ replaced by $(X_N,\mu_N,F_N)$
and $h_\mu^{\mathrm{F}}(\eta)$ by $h^{\mathrm{F}}(\widehat\mu_N)$.

For Lemma~\ref{lem:endpoint-mass}, we integrate the uniform packing
bound \eqref{eq:packing} over the base to obtain
\[
\widehat\mu_N\pr{\set*{(\zeta,\omega,x)\st
\widehat\mu_\omega(B(x,r))<q}}
\le C_0qr^{-d_M}.
\]
Using $(\pi_NF_N^{-n})_*\mu_N=\widehat\mu_N$ in place of
$(g_n)_*\gamma=\mu$, the same Borel--Cantelli argument gives
$
\frac1n\log\widehat\mu_{\omega_{-n}}
(B(x_{-n},r_{n,0}^\pm))\to
$ for $\mu_N$-almost every $z$.

Finally, we apply Lemma~\ref{lem:interpolation} to $\widehat\mu_{\omega_0}$. The concluding argument of Theorem~\ref{thm:main} gives the same
dimension bounds and since
$(z\mapsto(\omega_0,x_0))_*\mu_N=\widehat\mu$, these bounds hold
$\widehat\mu$-almost everywhere. Letting $\ep \to 0$ and disintegrating over $\omega$ proves the theorem.
\end{proof}


\begin{thebibliography}{99}
		
		\bibitem{BarreiraPesinSchmeling1999}
		L.~Barreira, Ya.~Pesin, and J.~Schmeling,
		\emph{Dimension and product structure of hyperbolic measures},
		Ann. of Math. (2) \textbf{149} (1999), no.~3, 755--783.
		
		\bibitem{Baxendale1989}
		P.~H.~Baxendale,
		\emph{Lyapunov exponents and relative entropy for a stochastic flow of
			diffeomorphisms},
		Probab. Theory Related Fields \textbf{81} (1989), 521--554.
		
		\bibitem{BrownRodriguezHertz2026}
A.~Brown and F.~Rodriguez Hertz,
\emph{Entropy formula for Furstenberg entropy: invariance principle and
exact dimensionality of contracting stationary measures},
unpublished manuscript, version dated September~2, 2026, 78 pp.
		
		\bibitem{Crauel90}
		H.~Crauel,
		\emph{Extremal exponents of random dynamical systems do not vanish},
		J. Dynam. Differential Equations \textbf{2} (1990), no.~3, 245--291.
		
		\bibitem{Crauel93}
		H.~Crauel,
		\emph{Non-Markovian invariant measures are hyperbolic},
		Stochastic Process. Appl. \textbf{45} (1993), no.~1, 13--28.
		
		\bibitem{EskinLindenstrauss}
		A.~Eskin and E.~Lindenstrauss,
		\emph{Random walks on locally homogeneous spaces},
		preprint, 2018.

\bibitem{FalconerTechniques}
K.~J. Falconer,
\emph{Techniques in Fractal Geometry},
John Wiley \& Sons, Chichester, 1997.
        
		\bibitem{Federer}
		H.~Federer,
		\emph{Geometric Measure Theory},
		Die Grundlehren der mathematischen Wissenschaften, Band~153,
		Springer-Verlag, New York, 1969.
		
		\bibitem{FengHu2009}
		D.-J. Feng and H. Hu,
		\emph{Dimension theory of iterated function systems},
		Comm. Pure Appl. Math. \textbf{62} (2009), no.~11, 1435--1500.
		
		\bibitem{HeJiaoXu2023}
		W.~He, Y.~Jiao, and D.~Xu,
		\emph{On the dimension theory of random walks and group actions by circle
			diffeomorphisms},
		preprint, arXiv:2304.08372, 2023.
		
		\bibitem{HochmanSolomyak2017}
		M.~Hochman and B.~Solomyak,
		\emph{On the dimension of Furstenberg measure for
			$\mathrm{SL}_2(\mathbb R)$ random matrix products},
		Invent. Math. \textbf{210} (2017), no.~3, 815--875.
		
		\bibitem{Kingman}
		J.~F.~C. Kingman,
		\emph{The ergodic theory of subadditive stochastic processes},
		J. Roy. Statist. Soc. Ser. B \textbf{30} (1968), 499--510.
		
		\bibitem{Ledrappier}
		F.~Ledrappier,
		\emph{Quelques propri\'et\'es des exposants caract\'eristiques},
		in \emph{\'Ecole d'\'Et\'e de Probabilit\'es de Saint-Flour XII--1982},
		Lecture Notes in Mathematics, vol.~1097,
		Springer, Berlin, 1984, 305--396.
		
		\bibitem{LedrappierLessa2023}
		F.~Ledrappier and P.~Lessa,
		\emph{Exact dimension of Furstenberg measures},
		Geom. Funct. Anal. \textbf{33} (2023), 245--298.
		
		\bibitem{LL}
		F.~Ledrappier and P.~Lessa,
		\emph{Exact dimension of dynamical stationary measures},
		J. Mod. Dyn. \textbf{20} (2024), 679--715.
		
		\bibitem{LedrappierYoung1985}
		F.~Ledrappier and L.-S.~Young,
		\emph{The metric entropy of diffeomorphisms. Part II. Relations between
			entropy, exponents and dimension},
		Ann. of Math. (2) \textbf{122} (1985), no.~3, 540--574.
		
		\bibitem{LedrappierYoung1988}
		F.~Ledrappier and L.-S.~Young,
		\emph{Dimension formula for random transformations},
		Comm. Math. Phys. \textbf{117} (1988), no.~4, 529--548.
		
		\bibitem{Lessa}
		P.~Lessa,
		\emph{Entropy and dimension of disintegrations of stationary measures},
		Trans. Amer. Math. Soc. Ser. B \textbf{8} (2021), no.~4, 105--129.
		
		\bibitem{LiuXie2006}
		P.-D.~Liu and J.-S.~Xie,
		\emph{Dimension of hyperbolic measures of random diffeomorphisms},
		Trans. Amer. Math. Soc. \textbf{358} (2006), no.~9, 3751--3780.
		
		\bibitem{Maker}
		P.~T.~Maker,
		\emph{The ergodic theorem for a sequence of functions},
		Duke Math. J. \textbf{6} (1940), no.~1, 27--30.
		
		\bibitem{Petersen}
		P.~Petersen,
		\emph{Riemannian Geometry},
		3rd ed., Graduate Texts in Mathematics, vol.~171,
		Springer, Cham, 2016.
		
		\bibitem{Rapaport2021}
		A.~Rapaport,
		\emph{Exact dimensionality and Ledrappier--Young formula for the
			Furstenberg measure},
		Trans. Amer. Math. Soc. \textbf{374} (2021), no.~7, 5225--5268.
		
		\bibitem{Viana}
		M.~Viana,
		\emph{Lectures on Lyapunov Exponents},
		Cambridge Studies in Advanced Mathematics, vol.~145,
		Cambridge University Press, Cambridge, 2014.
		
		\bibitem{Young1982}
		L.-S.~Young,
		\emph{Dimension, entropy and Lyapunov exponents},
		Ergodic Theory Dynam. Systems \textbf{2} (1982), no.~1, 109--124.
		
	\end{thebibliography}
\end{document}